\newtheorem{thm}{Theorem}
\newtheorem{lem}[thm]{Lemma}
\newtheorem{cor}[thm]{Corollary}
\newtheorem{defe}[thm]{Definition}
\theoremstyle{remark}
\newtheorem{rem}[thm]{Remark}
\newcommand\myurl[1]{\url{#1}}
\newcommand{\nc}{\newcommand}
\nc{\ssec}{\subsection}
\nc{\on}{\operatorname}
\nc {\cA}{\mathcal{A}}
\nc {\cG} {\mathcal{G}}
\nc {\cK}{\mathcal{K}}
\nc {\cC} {\mathcal{C}}
\nc {\cL} {\mathcal{L}}
\nc {\cE} {\mathcal{E}}
\nc {\cM} {\mathcal{M}}
\nc {\cO}{\mathcal{O}}
\nc {\cF}{\mathcal{F}}
\nc {\cZ}{\mathcal{Z}}
\nc {\bZ}{\mathbb{Z}}
\nc {\uG} {\underline{G}}
\nc {\cB}{\mathcal{B}}
\nc{\rat}{\mathrm{rat}}
\nc {\fk}{\mathfrak{k}}
\nc {\fI}{\mathfrak{i}}
\nc {\fg} {\mathfrak{g}}
\nc {\fl} {\mathfrak{l}}
\nc {\fn} {\mathfrak{n}}
\nc {\cP} {\mathcal{P}}
\nc {\fz} {\mathfrak{z}}
\nc {\fc}{\mathfrak{c}}
\nc {\fh}{\mathfrak{h}}
\nc {\fp}{\mathfrak{p}}
\nc {\ft}{\mathfrak{t}}
\nc{\tg} {\mathtt{g}}
\nc {\tP}{\mathcal{P}}
\nc {\hfg} {\widehat{\fg}}
\nc {\hG} {\check{G}}
\nc {\bGm} {\mathbb{G}_m}
\nc{\bC}{\mathbb{C}}
\nc{\bV}{\mathbb{V}}
\nc{\bP}{\mathbb{P}}
\nc{\bA}{\mathbb{A}}
\nc {\bQ}{\mathbb{Q}}
\nc {\bR}{\mathbb{R}}
\nc{\Sl}{\mathfrak{sl}}
\nc{\Gl}{\mathfrak{gl}}
\nc {\So}{\mathfrak{so}}
\nc{\ra}{\rightarrow}
\nc {\tU}{\tilde{U}}
\nc {\tSym}{\widetilde{Sym}}
\nc {\Bun}{\mathrm{Bun}}
\nc {\Fun}{\mathrm{Fun}}
\nc {\crit}{\mathrm{crit}}
\nc {\Ind}{\mathrm{Ind}}
\nc {\Vac}{\mathrm{Vac}}
\nc {\gr}{\mathrm{gr}}
\nc {\ad}{\mathrm{ad}}
\nc {\Sym}{\mathrm{Sym}}
\nc {\Ram}{\mathrm{Ram}}
\nc {\Op}{\mathrm{Op}}
\nc {\Hitch}{\mathrm{Hitch}}
\nc {\fb}{\mathfrak{b}}
\nc{\cDt}{\mathcal{D}^\times}
\nc{\tp}{\mathtt{p}}
\nc {\tk}{\mathtt{k}}
\nc{\cN}{\mathcal{N}}
\nc {\cY}{\mathcal{Y}}
\nc {\loc}{\mathrm{loc}}
\nc {\Res}{\mathrm{Res}}
\nc {\Hom}{\mathrm{Hom}}
\nc {\End}{\mathrm{End}}
\nc {\Perm}{\mathrm{Perm}}
\nc{\Dec}{\mathrm{Dec}}
\nc {\reg}{\mathrm{reg}}
\nc {\GL}{\mathrm{GL}}
\nc{\SL}{\mathrm{SL}}
\nc{\fm}{\mathfrak{m}}
\nc {\fa}{\mathfrak{a}}
\nc {\Spec}{\mathrm{Spec}}
\nc {\cD}{\mathcal{D}}
\nc {\nilp}{\mathrm{nilp}}
\nc {\tN}{\widetilde{\cN}}
\nc {\tO}{\widetilde{\cO}}
\nc {\Sp}{\mathrm{Sp}}
\nc{\cV}{\mathcal{V}}
\nc {\RS}{\mathrm{RS}}
\nc {\ord}{\mathrm{ord}}
\nc {\fu}{\mathfrak{u}}
\begin{document} 
\title{Complete integrability of the parahoric Hitchin system} 

\begin{abstract} 
Let $\cG$  be a parahoric group scheme over a complex projective curve $X$ of genus greater than one. Let $\Bun_\cG$ denote the moduli stack of $\cG$-torsors on $X$. We prove several results concerning the Hitchin map on $T^*\!\Bun_\cG$. We first show that the parahoric analogue of the global nilpotent cone is isotropic and use this to prove that $\Bun_\cG$ is ``very good'' in the sense of Beilinson-Drinfeld. We then prove that the parahoric Hitchin map is a  Poisson map whose generic fibres are abelian varieties. Together, these results imply that the parahoric Hitchin map is a completely integrable system. 
\end{abstract} 

\author{David Baraglia} 
\author{Masoud Kamgarpour}
\author{Rohith Varma}

\subjclass[2010]{17B67, 17B69, 22E50, 20G25}

\address{Department of Mathematics, University of Adelaide} 
\email{david.baraglia@adelaide.edu.au}

\address{School of Mathematics and Physics, The University of Queensland} 
\email{masoud@uq.edu.au}

\address{Institute of Mathematical Sciences, Chennai}
\email{rvarma.kvm@gmail.com}

\maketitle

\tableofcontents

\section{Introduction} \label{s:intro}

The Hitchin system \cite{Hitchin1, Hitchin} lies at the crossroad of geometry, Lie theory, and mathematical physics. It has found remarkable applications to non-abelian Hodge theory \cite{Simpson, SIM} and the Langlands program \cite{BD, Ngo}. The goal of this paper is to prove the Hitchin map continues to have many of its crucial properties if $G$ is replaced by a Bruhat-Tits \emph{parahoric group scheme}. 

In this paper, we work with parahoric group schemes whose generic fibres are simple and simply connected; thus, we do not treat parahoric group schemes whose generic fibres are not split. Moreover, we restrict to characteristic zero. The reason is we rely on \cite{BS} and \cite{BGPM} which assume these restrictions. We expect, however, that most of the results of this paper hold under less rigid assumptions.

\subsection{Parahoric group schemes}  Let $X$ be a smooth projective curve over $\bC$. 
For each $x\in X$, we let ${\cO_x}$ denote the algebra of functions on the completed formal neighbourhood of a point and $\cK_x$ denote the fraction field. If we choose a uniformiser $t$, then ${\cO_x} \simeq \bC[\![t]\!]$ and $\cK_x \simeq \bC(\!(t)\!)$.

Let $G$ be a simple simply connected algebraic group over $\bC$.  An \emph{integral model} for $G$ is a smooth connected affine group scheme over $X$ whose generic fibre is isomorphic to $G$. If $\cG$ is an integral model, then for all but finitely many $x\in X$, called \emph{points of bad reduction} or \emph{ramification points}, one has an isomorphism $\cG({\cO_x})\simeq G({\cO_x})$.

Among all integral models, a special role is played by \emph{parahoric group schemes}. These are integral models $\cG$ such that at the points of bad reduction, $\cG({\cO})$ is a parahoric subgroup of $G(\cK)$. 
To give the reader an idea about parahoric subgroups of the loop group $G(\cK)$, we compare them with their cousins, namely, parabolic subgroups of $G$. For details, cf. \cite{KumarBook}. 
\begin{enumerate} 
\item[(a)]  Recall that parabolic subgroups are exactly those subgroups $P\subset G$ such that $G/P$ is proper. Similarly, parahoric subgroups are exactly those subgroups of the loop group $\tP\subset G(\cK)$ such that the quotient $G(\cK)/\tP$ is \emph{ind-proper}.
\item[(b)] Given a subset of simple roots of $G$, the group generated by the corresponding root subgroups, together with the root subgroups corresponding to all negative simple roots and the maximal torus, is a parabolic subgroup of $G$. This establishes a bijection between subsets of simple roots and conjugacy classes of parabolics. A similar procedure provides a bijection between \emph{proper} subsets of \emph{affine} simple roots and conjugacy classes of parahorics in $G(\cK)$.
\end{enumerate}

Among parahoric subgroups, the most familiar ones are those constructed from parabolics in the following manner. Consider the canonical evaluation map $ev:G({\cO})\ra G$ defined by $t\mapsto 0$. Given a parabolic $P\subset G$, we have that $ev^{-1}(P)$ is a parahoric subgroup of $G(\cK)$. We refer to these as \emph{parahorics of parabolic type}. Equivalently, this is when the subset of affine simple root corresponding to the parahoric is really a subset of the usual (= non-affine) simple roots. 

Given parahoric subgroups $\tP_1,\cdots, \tP_l\subset G(\cK)$ and finitely many points $x_1,\cdots, x_l$ on the curve $X$, there exists integral models $\cG$ over $X$ with bad reduction exactly at the $x_i$'s satisfying $\cG({\cO_{x_i}})\simeq \tP_i$. This is a consequence of the work of Bruhat and Tits \cite{BT1, BT2}; see also \cite{Yu, BS}. Throughout this paper we assume that we only have one point $x \in X$ of bad reduction. This is done in order to keep our notation simple. As we shall see, none of our proofs depend on this assumption, so the result generalises easily to the case of finitely many points.

\subsection{Parahoric Hitchin system} Henceforth, we assume that genus of $X$ is greater than one. 
Let $\cG$ be a parahoric group scheme over $X$ and let $\Omega=\Omega_X$ denote the canonical bundle. Given a $\cG$-bundle $\cE$, one may construct an adjoint bundle $\ad(\cE)$ of $\cE$, which is a vector bundle with a Lie algebra structure on its sheaf of sections. The co-adjoint bundle $\ad^*(\cE)$ is the vector bundle dual to $\ad(\cE)$. A $\cG$-Higgs bundle is a pair $(\cE, \phi)$ consisting of a $\cG$-bundle together with an element $\phi\in \Gamma(X, \ad^*(\cE)\otimes \Omega)$. Let $\cM_\cG$ denote the moduli stack of $\cG$-Higgs bundles. Using Serre duality, one can show that
\[
\cM_\cG\simeq T^*\Bun_\cG,
\]
where $T^*$ should be interpreted as in \cite[\S 1]{BD}.

Now we discuss the Hitchin map. Let $Q_1,\cdots, Q_\ell$ denote algebraically independent homogenous generators of the invariant ring $\bC[\fg]^G$ and let $d_i$ denote the degree of $Q_i$. Using local trivializations of the $\mathcal{G}$-bundle $\mathcal{E}$, one can locally identify $\phi$ with a section of $\fg^* \otimes \Omega(x)$, or via the Killing form, a local section of $\fg \otimes \Omega(x)$. Since changes of local trivialization act on $\phi$ via the adjoint action, we may apply the invariant polynomial $Q_i$ to $\phi$ to obtain a well-defined global section $Q_i(\phi) \in \Gamma( X , \Omega^{d_i}(d_i.x) )$. In this way we may define the Hitchin map on $T^*\Bun_\cG$ as follows: 
\begin{equation} \label{eq:parahoricMap}
(\cE,\phi) \mapsto (Q_1,Q_2, \cdots, Q_\ell)(\phi) \in \bigoplus_i \Gamma(X , \Omega^{d_i}(d_i . x)).
\end{equation} 
This map takes values in $\bigoplus_i \Gamma(X, \Omega^{d_i}(d_i.x))$. However, contrary to the usual Hitchin map, this map is not surjective. Let $\cA_\cG$ denote the closure of its image. This is an affine subvariety of $\bigoplus_i \Gamma(X, \Omega^{d_i}(d_i.x))$.\footnote{Determining the precise description of $\cA_\cG$ is a delicate problem, which we treat elsewhere \cite{BK}.} 

\begin{defe}\label{d:parhit} The parahoric Hitchin map is the map 
\[
h_\cG: T^*\Bun_\cG \ra \cA_\cG.
\]
defined by \eqref{eq:parahoricMap}.  
\end{defe}

 Let $\cN ilp_\cG=h_\cG^{-1}(0)$ denote the parahoric global nilpotent cone. 
Our first main result is that 
 $\cN ilp_\cG$ is isotropic in $T^*\Bun_\cG$. To prove this, we adapt arguments of Ginzburg to the parahoric setting. A key technical tool we use is the description of parahoric torsors via ramified covers \cite{BS}.

 Next, we show that $\Bun_\cG$ is very good in the sense of Beilinson and Drinfeld. In particular, this implies that $\dim(T^*\Bun_\cG)=2\dim(\Bun_\cG)$. 
Our proof is a generalisation of  the argument of Beilinson and Drinfeld (who worked with $\cG=G\times X$) to the parahoric case. The property of being very good means that $\Bun_\cG$ is in a sense not too far from being a Deligne-Mumford stack. The upshot of this is that one can naively apply various notions of symplectic geometry to $T^*\Bun_\cG$ without having to work in the setting of derived geometry. For instance, this allows us to adapt an argument of Bottacin to prove that 
$h_\cG$ is a Poisson map. Here the image of $h_\cG$ is equipped with the trivial Poisson structure, so to say that $h_\cG$ is Poisson is equivalent to saying that the pullbacks of functions on the base are Poisson commuting. 

The fact that the global nilpotent cone is isotropic together with the natural contracting $\bC^*$ action present implies that every fibre of $h_\cG$ has dimension at most $\dim(\Bun_\cG)$. On the other hand, the fact that $h_\cG$ is Poisson implies that every fibre is co-isotropic. Thus, we have a Lagrangian fibration. We prove that this Lagrangian fibration is a completely integrable system, by showing that generic fibres of $h_\cG$ are abelian varieties. A crucial step here is  establishing that the  Hitchin map on 
the moduli of \emph{polystable} parahoric Higgs bundles is proper. We do this by appealing to the parahoric non-abelian Hodge theory established in \cite{BGPM}. It should also be possible to establish properness by proving  semistable reduction for parahoric Higgs bundles. We leave this as an interesting topic for future work.

\subsection{What was known before?} Let $\cG$ be a parahoric group scheme on $X$. If $\cG$ has no point of bad reduction (i.e., $\cG\simeq G\times X$), then we are reduced to the system considered by Hitchin. Next, if the parahoric group scheme is parabolic of Borel type, then the above results are treated in \cite[\S V]{Faltings}. In type $A$, complete integrability of the  parabolic Hitchin map was established by Scheinost and Schottenloher \cite{SS}, but in the setting of semistable moduli spaces as opposed to moduli stacks.  
 
Let us mention that strongly parabolic Higgs bundles have a variant called \emph{weakly} parabolic Higgs bundles. These are pairs $(\cE, \phi)$ consisting of $G$-bundle $\cE$ with parabolic reduction at a point $x$ and an endomorphism $\phi\in \Gamma(  \cE \times_G \fg \otimes \Omega(x))$ whose residue lies in $\fp$. (By comparison, in the strongly parabolic case considered in this article, the residue lies in the nilpotent radical $\fn$.) If $G$ is of type $A$, then Logares and Martens \cite{LogaresMarten} have proved that the Hitchin map on the semistable moduli space of weakly parabolic Higgs bundles defines a generalised completely integrable system, see also \cite{Markman, Bot}. Note that this implies that {\em generic} symplectic leaves are integrable systems. However, this does not imply that any particular symplectic leaf is an integrable system. Thus, one can not deduce integrability of moduli spaces of strongly parabolic Higgs bundles from these results, which in any case have only been proven for parabolics of type $A$.

\subsection{Acknowledgements} The idea of considering the Hitchin map for $\Bun_\cG$ is due to Xinwen Zhu. We thank him for sharing his insights with us. We would also like to thank Dima Arinkin, David Ben-Zvi, Tsao-Hsien Chen, Sergei Gukov, Arun Ram
and Zhiwei Yun for helpful discussions. DB and MK were supported by the Australian Research Council DECRA Fellowships and RV was supported by a post-doctoral fellowship at TIFR, Mumbai.


\section{Parahoric Higgs bundles}

\subsection{Basic Lie theory}\label{ss:lietheory}   
Let $G$ be a simple, simply-connected complex algebraic group of rank $\ell$. Fix a maximal torus $T \subset G$ and let $\ft \subset \fg$ be the corresponding Lie algebras. Let $X(T) = Hom( T , \mathbb{G}_m )$ be the character group, $Y(T) = Hom( \mathbb{G}_m , T )$ the group of $1$-parameter subgroups of $T$ and denote by $( \, , \, ) : Y(T) \times X(T) \to \mathbb{Z}$ the canonical pairing. Let $R \subset X(T)$ be the root system associated to $G$ and fix a choice of positive roots $R^+ \subset R$. The choice of positive roots $R^+$ determines a Borel subgroup $B \subset G$ with unipotent radical $U$. Associated to each root $r \in R$, we have a root space $\fg_r \subset \fg$, a root homomorphism $u_r : \mathbb{G}_a \to G$ and a corresponding subgroup $U_r \subset G$, called the root group corresponding to $r$.

Let $\alpha_1 , \dots , \alpha_{\ell}$ be the simple roots and $\alpha_{max}$ the highest root. We let $\mathfrak{U}$ denote the rational Weyl alcove
\begin{equation*}
\mathfrak{U} := \{ \theta \in Y(T) \otimes \mathbb{Q} \; | \; (\theta , \alpha_{max}) \le 1, \; (\theta , \alpha_i) \ge 0, \; \forall \text{ positive roots } \alpha_i \}
\end{equation*}
and let $\mathfrak{U}^o$ denote the points $\theta \in \mathfrak{U}$ which satisfy $(\theta , \alpha_{max}) < 1$.

\subsubsection{Parahoric subgroups} 
Let $\cO = \bC[\![t]\!]$ be the local ring of formal power series in $t$ and $\cK = \bC(\!(t)\!)$ the fraction field of $\cO$. For any $\theta \in \mathfrak{U}$ and root $r \in R$, consider the integer
\begin{equation*}
m_r(\theta) = -\lfloor ( \theta , r ) \rfloor.
\end{equation*}
The subgroup
\begin{equation}\label{eq:parahoric}
\mathcal{P}_\theta = \langle T(\cO) , \{ z^{m_r(\theta)} U_r(\cO) \}_{r \in R} \rangle \subset G(\cK)
\end{equation}
is a parahoric subgroup in the sense of Bruhat-Tits. Note that $\mathcal{P}_\theta$ and the constructions that follow depend only on the choice of $\theta$ through the facet $\Omega$ of the affine apartment $Y_*(T) \otimes \mathbb{R}$ on which $\theta$ lies, but to keep the notation simple we will avoid any further mention of facets.\footnote{Thus, we are really considering \emph{weighted} parahorics in the sense of \cite{BoalchParahoric}. The weight does not play a role in defining the moduli stack of parahoric torsors, but is essential for defining stability.}

\subsubsection{Parahoric group schemes} 
From Bruhat-Tits theory, we get a smooth affine group scheme $\mathcal{G}_\theta$ over $Spec(\cO)$, which satisfies:
\begin{itemize}
\item[(i)]{$\mathcal{G}_\theta \times_{Spec(\cO)} Spec(\cK) \cong G \times Spec(\cK)$.}
\item[(ii)]{$\mathcal{G}_\theta(\cO) = \mathcal{P}_\theta$.}
\end{itemize}

\subsubsection{Parahorics of parabolic type} 
If $\theta \in \mathfrak{U}^o$ then $\theta$ also determines a parabolic subgroup $P_\theta \subset G$ which is generated by $T$ and the root groups $U_r$ for which $m_r(\theta) = 0$. Moreover, the parahoric $\mathcal{P}_\theta$ and the parabolic $P_\theta$ are related by
\begin{equation*}
\mathcal{P}_\theta = ev^{-1}( P_\theta ),
\end{equation*}
where $ev : G(\cO) \to G$ is the evaluation map sending $t$ to zero. We refer to the case $\theta \in \mathfrak{U}^o$ as the {\em parabolic case} and we call $\mathcal{P}_\theta$ a parahoric subgroup of {\em parabolic type}.

\subsubsection{Parahoric lie algebras}
Associated to $\theta \in \mathfrak{U}$ we also have the corresponding parahoric subalgebra $\tp_\theta \subset \fg(\cK)$, which may be defined as:
\begin{equation}\label{eq:parahoricsubalgebra}
\tp_\theta = \ft(\cO) \oplus \bigoplus_{r \in R} t^{m_r(\theta)} \fg_r(\cO).
\end{equation}

Let $\kappa$ be the Killing form on $\fg$. We define the dual $\tp_\theta^\perp$ as
\[
\tp_\theta^\perp = \{ u \in \fg(\cK) \; | \; \kappa( u , v ) \in \cO \; \; \forall v \in \tp_\theta \}.
\]
We have a natural isomorphism
\begin{equation*}
\Psi : \tp_\theta^\perp \to Hom_\cO( \tp_\theta , \cO)
\end{equation*}
which sends $u \in \tp_\theta^\perp$ to the homomorphism $v \mapsto \kappa(u , v )$. This isomorphism identifies $\tp_\theta^\perp$ with the dual of $\tp_\theta$ as $\cO$-modules. From (\ref{eq:parahoricsubalgebra}) we see that:
\begin{equation*}
\tp_\theta^\perp = \ft(\cO) \oplus \bigoplus_{r \in R} t^{-m_r(\theta)} \fg_{-r}(\cO).
\end{equation*}
Elements of $\tp_\theta^\perp$ should be interpreted as local parahoric Higgs fields.

\subsection{Parahoric torsors}\label{ss:parabolicBundles} 
Fix a marked point $x \in X$ and a parahoric subgroup $\mathcal{P}_\theta \subset G(\cK)$ associated to some $\theta \in \mathfrak{U}$. Recall that $\cO_x$ denotes the completed local ring at $x$. Let $D_x = Spec(\cO_x)$ be the completed formal neighbourhood of $x$. By a {\em parahoric Bruhat-Tits group scheme} on $X$, we mean a group scheme $\mathcal{G}_{X,x,\theta}$ over $X$ such that:
\begin{itemize}
\item[(i)]{$\mathcal{G}_{X,x,\theta} |_{X - x} \cong G \times (X - x)$.}
\item[(ii)]{$\mathcal{G}_{X,x,\theta}|_{D_{x}} \cong \mathcal{G}_{\theta}$.}
\end{itemize}
Such a group scheme exists for any $(X,x,\theta)$ \cite{BS}. To simplify notation we will omit the subscripts $X,x,\theta$ and write $\cG = \cG_{X,x,\theta}$. We will also refer to $\cG$ simply as a {\em parahoric group scheme} on $X$. Denote by $Bun_{\cG}$ the stack of $\mathcal{G}$-torsors on $X$. We note here that the group scheme $\cG$ is not uniquely determined by the data $(X,x,\theta)$, but it follows from \cite{BS} that the corresponding stacks $Bun_{\cG}$ are all equivalent once $(X,x,\theta)$ are fixed. 

In \cite{Heinloth}, it is shown that $Bun_{\cG}$ is a smooth algebraic stack locally of finite type. One can show that the dimension of $Bun_{\cG}$ is given by \cite{BS}:
\begin{equation*}
\dim( Bun_{\cG} ) = (g-1)\dim(G) + \# \{ r \in R^+ \; | \; \langle \theta , r \rangle \neq 0,1 \}.
\end{equation*}

In the parabolic case, i.e. when $\theta \in \mathfrak{U}^o$, torsors for $\cG$ correspond to the more familiar notion of (quasi-)parabolic bundles. We recall the definition \cite{LS}:
\begin{defe} 
Let $P \subset G$ be a parabolic subgroup of $G$. A quasi-parabolic $G$-bundle on $X$ with parabolic structure of type $P$ at $x$ is a pair $(\cE,\cE_x^P)$, where $\cE$ is a principal $G$-bundle on $X$ and $\cE_x^P$ is a $P$-reduction of $\cE$ at $x$; i.e. an element of $\cE_x \times_G (G/P)$. A parabolic $G$-bundle on $X$ is a quasi-parabolic bundle $(\cE , \cE_x^P)$ together with a choice of $\theta \in \mathfrak{U}^o$ such that $P = P_\theta$. One refers to $\theta$ as the weights of the parabolic bundle at $x$.
\end{defe}

\subsection{Parahoric Higgs bundles} \label{ss:HiggsBundles}
Associated to a parahoric group scheme $\cG = \cG_{X,x,\theta}$, we have a bundle of Lie algebras $\on{Lie}( \cG) \to X$ over $X$. This is a vector bundle equipped with a Lie bracket on its sheaf of sections. Away from $x$ it is just given by the trivial bundle $(X - x ) \times \fg$, while in a formal neighbourhood $D_x = Spec( {\cO}_x)$ of $x$, it may be identified with the Lie algebra $\fp_\theta = \on{Lie}(\mathcal{P}_\theta)$, as given by Equation \ref{eq:parahoricsubalgebra}.

Suppose that $\cE \to X$ is a parahoric $\cG$-torsor. We define the adjoint bundle of $\cE$ by
\begin{equation*}
ad(\cE) = \cE \times_{\cG} Lie( \cG),
\end{equation*}
where $\cG$ acts on $Lie(\cG)$ by the adjoint action. It follows that $ad(\cE)$ is a vector bundle over $X$ and is equipped with a Lie bracket on its sheaf of sections. We let $ad^*(\cE)$ denote the dual bundle. 

\begin{defe} Let $\cE \in Bun_{\cG}$ be a parahoric torsor. A parahoric Higgs field is a section $\phi\in \Gamma(X, ad(\cE)^* \otimes \Omega)$. A parahoric Higgs bundle is a pair $(\cE, \phi)$ consisting of a parahoric torsor $\cE$ and a parahoric Higgs field $\phi$. 
\end{defe} 
More generally, to define the stack $\mathcal{M}_{\cG}$ of parahoric Higgs bundles, let $S$ be an arbitrary scheme over $\bC$ and define
\begin{equation*}
\mathcal{M}_{\cG}(S) = \{ (\cE , \phi ) \; | \; \cE \in Bun_{\cG}(S), \; \phi \in \Gamma( X \times S , ad(\cE)^* \otimes \Omega^1_{X \times S / S}) \},
\end{equation*}
with the obvious notion of isomorphism between objects. We recall that $Bun_{\cG}$ is a smooth equidimensional algebraic stack. From standard deformation theory, we deduce an isomorphism of stacks
\begin{equation*}
\mathcal{M}_{\cG} \cong T^*Bun_{\cG}.
\end{equation*}
Thus, $\mathcal{M}_{\cG}$ is a symplectic stack; in particular, we may speak of the Poisson bracket and of isotropic substacks. Note that in the setting of stacks, care has to be taken to study the above notions; see \cite[\S 1]{BD} for details.

In the parabolic case, where $\theta \in \mathfrak{U}^o$ the notion of parahoric Higgs bundles reduces to the more familiar notion of (quasi-)parabolic Higgs bundles, as we now recall. Note that some authors call such Higgs bundles {\em strongly parabolic}.

\begin{defe} Let $(\cE , \cE_x^P)$ be a parabolic bundle. Let $\fp = \on{Lie}(P)$ be the Lie algebra of the parabolic subgroup $P \subset G$ and $\fn \subset \fp$ the nilpotent radical of $\fp$. A parabolic Higgs field is a section $\phi \in \Gamma(X, \cE \times_G \fg^* \otimes \Omega(x) )$ such that $Res_x(\phi) \in \fn$. A quasi-parabolic Higgs bundle is a triple $(\cE, \cE_x^P , \phi)$ consisting of a parabolic bundle $(\cE , \cE_x^P)$ and a parabolic Higgs field $\phi$. A parabolic Higgs bundle is a quasi-parabolic Higgs bundle $(\cE , \cE_x^P , \phi)$ together with a choice of weight $\theta \in \mathfrak{U}^o$ for which $P = P_\theta$.
\end{defe}

\subsection{Relation to equivariant bundles}\label{ss:equivariant}
In this section we remark on the relation between parahoric torsors and parahoric Higgs bundles to equivariant torsors and equivariant Higgs bundles on a Galois cover of $X$.

Suppose that $p : Y \to X$ is a Galois cover with Galois group $\Gamma$ such that $p$ has ramification over the marked point $x \in X$ and nowhere else. Let $y \in Y$ be one of the points lying over $x$ and let $\Gamma_y \subseteq \Gamma$ be the stabiliser, which is cyclic of order $n$ say. Let $g \in \Gamma_y$ be a generator. Following \cite{BS}, we define a $(\Gamma , G)$-bundle on $Y$ to be a principal $G$-bundle $E \to Y$ with $G$ acting on the right, equipped with a lift of $\Gamma$ to an action on $E$ commuting with the $G$-action. In other words, $E$ is a $\Gamma$-equivariant principal $G$-bundle. 

Given a local trivialization of $E$ around $y$, we can identify the fibre $E_y$ with $G$ such that the right action is given by group multiplication on the right. Then $\Gamma_y$ acts on $E_y \cong G$ commuting with the right $G$-action. This implies that there is a homomorphism $\rho_y : \Gamma_y \to G$ such that the action of $\gamma \in \Gamma_y$ on $E_y \cong G$ is left multiplication by $\rho_y(\gamma)$. Since $\Gamma_y  = \langle g \rangle $ is a cyclic group of order $n$, the homomorphism $\rho_y : \Gamma_y \to G$ corresponds to choosing an element $h \in G$ whose order divides $n$, so that $\rho_y(g) = h$. If we choose a different local trivialization of $E$, then $\rho_y$ and hence $h \in G$ change by conjugation. Thus associated to the $(\Gamma , G)$-bundle $E$ is a well defined conjugacy class $\mathcal{C} \subset G$ of finite order and we say that $E$ has {\em type} $\mathcal{C}$. If $\tau \in \mathcal{C}$ is any representative of the conjugacy class $\mathcal{C}$, we will also say that $E$ has type $\tau$. Note that for any given representative $\tau \in \mathcal{C}$, we may choose our local trivialization of $E$ such that $\rho_y(g) = \tau$.

Recall that each element $\theta \in \mathfrak{U}$ of the rational Weyl alcove corresponds to a conjugacy class $\mathcal{C}_\theta \subset G$ of finite order via the exponential map. Let $\tau \in \mathcal{C}_\theta$ be a representative of this conjugacy class. We let $Bun_Y^\tau(\Gamma , G)$ denote the stack of $\Gamma$-equivariant principal $G$-bundles on $Y$ of type $\tau$. According to Balaji-Seshadri \cite[Theorem 5.3.1]{BS}, this stack is equivalent to $Bun_{\mathcal{G}_{X,x,\theta}}$, where $\cG_{X,x,\theta}$ is a parahoric group scheme on $X$. We will often find it useful to work with equivariant bundles on $Y$ in place of parahoric bundles on $X$. We note that  from the equivariant bundle point of view there is clearly no difficulty in allowing $G$ to be any connected reductive group. This will be important for us in the proof of Theorem \ref{t:verygood}. However, when $G$ is non-simply connected the relation between equivariant bundles and parahoric bundles requires more care, see \cite[\S 8.1.13]{BS}.

If $E \to Y$ is an equivariant bundle of type $\tau$, an equivariant Higgs field on $E$ is defined to be a $\Gamma$-equivariant section of $ad^*(E) \otimes \Omega_Y$. We may then speak of equivariant Higgs bundles of type $\tau$. One sees that the stack of all such Higgs bundles is precisely $T^* Bun_Y^\tau(\Gamma , G)$. The equivalence $Bun_{\mathcal{G}_{X,x,\theta}} \cong Bun_Y^\tau(\Gamma , G)$ gives an equivalence $T^* Bun_{\mathcal{G}_{X,x,\theta}} \cong T^* Bun_Y^\tau(\Gamma , G)$. In other words, we obtain an equivalence between parahoric Higgs bundles on $X$ for the group scheme $\mathcal{G}_{X,x,\theta}$ and equivariant Higgs bundles on $Y$ of type $\tau$.  We note that the covering $p : Y \to X$ and the group $\Gamma$ depend on the choice of $\theta \in \mathfrak{U}$, but are not uniquely determined by $\theta$.

\subsection{Parahoric Hitchin map}

Recall that $\cO_x \cong \bC [\![t]\!]$ denotes the completed local ring of $x$ and $D_x = Spec(\cO_x)$. Suppose that $(\cE , \phi)$ is a parahoric Higgs bundle for $\cG = \mathcal{G}_{X,x,\theta}$. Away from $x$, we may identify $\cE$ with a principal $G$-bundle and $\ad(\cE)|_{X - x}$ is the usual adjoint bundle of $\cE |_{X - x}$. Moreover, the Killing form on $\fg$ gives an isomorphism $\ad^*(\cE)|_{X - x} \cong \ad(\cE)|_{X - x}$. Therefore, away from $x$, $\phi$ is a $1$-form valued section of the adjoint bundle. On the other hand, $\phi |_{D_x}$ may be identified with an element of $\tp_\theta^\perp \otimes \Omega^1_{D_x}$. Since we are assuming $\theta \in \mathfrak{U}$, it is easy to see that $m_r(\theta) \le 1$ for all roots $r \in R$ and hence $\tp_\theta^\perp \subset t^{-1} \fg(\cO_x)$. It follows that at all points of $X$, $\phi$ may locally be identified with a section of $\fg \otimes \Omega^1_X(x)$, i.e. $\phi$ is locally a $\fg$-valued $1$-form with at most a first order pole at $x$. A change in local trivialization of $\cE$ acts on $\phi$ via the adjoint action, and hence if $Q_i$ is an invariant polynomial on $\fg$ of degree $d_i$, we may apply $Q_i$ to $\phi$ to obtain a well-defined section $Q_i(\phi) \in \Gamma( X , \Omega^{d_i}(d_i . x))$. As explained in the introduction, this defines a map
\begin{equation*}
(\cE,\phi) \mapsto (Q_1,Q_2, \cdots, Q_\ell)(\phi) \in \bigoplus_i \Gamma(X , \Omega^{d_i}(d_i . x))
\end{equation*} 
where $Q_1 , Q_2 , \cdots , Q_\ell$ are algebraically independent homogeneous generators for the ring of invariant polynomials on $\fg$. This gives a map $\hat{h}_\cG : \mathcal{M}_\cG \to \bigoplus_i \Gamma(X , \Omega^{d_i}(d_i . x))$. We let $\cA_\cG$ denote the closure of the image of the map $\hat{h}_\cG$. Therefore $\cA_\cG$ is an affine subvariety of $\bigoplus_i \Gamma(X, \Omega^{d_i}(d_i.x))$. The map $\hat{h}_\cG$ factors through $\cA_\cG \to \bigoplus_i \Gamma(X , \Omega^{d_i}(d_i . x))$ and thus defines a map
\[
h_\cG: \mathcal{M}_\cG \ra \cA_\cG.
\]
As per Definition \ref{d:parhit}, we call $h_\cG$ the {\em parahoric Hitchin map}.

\section{Global nilpotent cone and the very good property}\label{ss:parabolicHitchin}

We continue to use the notation of the previous section. Thus, $\cG = \cG_{X,x,\theta}$ is a parahoric group scheme over $X$ with one ramification point $x$ such that $\cG(\cO_x)$ is a parahoric subgroup of type $\theta$.

\subsection{Global nilpotent cone}\label{ss:globalnilpotent}

Recall that for any scheme $S$ over $\bC$, we have
\[
\cM_{\cG}(S) = \{(\cE,s) \mid \cE \in Bun_{\cG}(S), \phi \in \Gamma(X \times S, ad(E)^*\otimes \Omega^1_{X\times S/S}) \}.
\]
Since $\cG \mid_{(X-x)} \cong G \times (X-x)$,
we have, via the isomorphism $\mathfrak{g} \cong \mathfrak{g}^*$
induced by the killing form,
$ad(\cE)^* \otimes \Omega^1_{X \times S/S} \mid_{(X -x) \times S} \cong ad(\cE)
\otimes \Omega^1_{X\times S/S} \mid_{(X -x) \times S}$. Thus for
any point $y \in (X-x) \times S$, we have
\[
\phi(y) \in \mathfrak{g} \otimes k(y).
\]

\begin{defe} We say $\phi \mid_{(X-x) \times S}$ is nilpotent if 
\[
\forall y \in (X-x) \times S, \,\ \phi(y) \in \mathfrak{g} \otimes k(y) \,\ \text{is nilpotent}.
\]
The \textit{global nilpotent cone} is the substack
$\cN ilp_{\cG}$ of $\cM_{\cG}$, given by 
\[
\cN ilp_{\cG}(S) = \{ (\cE,\phi) \in \cM_{\cG} \mid \phi \mid_{(X-x) \times S} \text{is nilpotent} \}.
\]
Equivalently, $\cN ilp_{\cG}=h_{\cG}^{-1}(0)$. 
\end{defe} 

We have the following key result:

\begin{thm}
$\cN ilp_{\cG}$ is an isotropic substack of $\mathcal{M}_{\cG}$.
\end{thm}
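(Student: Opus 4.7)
The plan is to adapt Ginzburg's classical proof of isotropy of the global nilpotent cone to the parahoric setting, by first using the Balaji--Seshadri description from \S 2.4 to translate the problem into an equivariant one on the ramified Galois cover $p:Y\to X$. Under the equivalence $\cM_\cG\cong T^*\Bun^{\tau}_Y(\Gamma,G)$, the substack $\cN ilp_\cG$ corresponds to the substack $\cN ilp^{\tau}_Y\subset T^*\Bun^{\tau}_Y(\Gamma,G)$ of equivariant Higgs bundles $(E,\phi)$ on $Y$ with $\phi$ fibrewise nilpotent; because the equivalence is one of symplectic stacks, it suffices to prove isotropy there. The overall strategy rests on three ingredients: (i) an equivariant Grothendieck--Springer-style resolution $\pi:\widetilde{\cN ilp}\to \cN ilp^{\tau}_Y$; (ii) a dimension estimate $\dim\widetilde{\cN ilp}\le \dim\Bun_\cG$; and (iii) the general principle (Laumon, after Kashiwara) that a closed conical substack of $T^*Z$ of dimension at most $\dim Z$ is isotropic.

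For the resolution, let $\widetilde{\cN ilp}$ parametrise triples $(E,E^B,\phi)$ where $E^B$ is a $\Gamma$-equivariant $B$-reduction of the equivariant $G$-bundle $E$, and $\phi\in H^0\bigl(Y,\,E^B\times_B \fn\otimes\Omega_Y\bigr)^{\Gamma}$. Using the Killing form to identify $(\fg/\fb)^*\cong\fn$, such $\phi$ are precisely the cotangent vectors at $E^B$ to the smooth morphism $\Bun^B_Y(\Gamma,G)\to\Bun^G_Y(\Gamma,G)$ annihilating the image of $\pi^*T^*\Bun^G$. The forgetful map $\pi:(E,E^B,\phi)\mapsto (E,\phi)$ lands in $\cN ilp^{\tau}_Y$ because $\fn\subset\fg$ is pointwise nilpotent. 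For the dimension bound, the fibre of $\widetilde{\cN ilp}$ over a point of $\Bun^B_Y(\Gamma,G)$ has dimension $\dim H^0(Y,E^B\times_B\fn\otimes\Omega_Y)^\Gamma$, which by equivariant Serre duality equals $\dim H^1(Y,E^B\times_B \fg/\fb)^\Gamma$; adding this to $\dim\Bun^B_Y(\Gamma,G)$ and substituting the tangent exact sequence of $\Bun^B\to\Bun^G$ yields, after the expected cancellation of $H^1$-terms, the estimate $\dim\widetilde{\cN ilp}\le \dim\Bun^G_Y(\Gamma,G)=\dim\Bun_\cG$.

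The remaining step, and the technical heart, is surjectivity of $\pi$ on closed points: every $(E,\phi)\in\cN ilp^{\tau}_Y$ admits a $\Gamma$-equivariant Borel reduction $E^B$ with respect to which $\phi$ takes values in $\fn$. Away from the ramification locus this follows, by stratifying according to the Jordan type of $\phi$, from pointwise Jacobson--Morozov combined with properness of $G/B$ to globalise. At a ramification point $y\in p^{-1}(x)$ the stabiliser $\Gamma_y$ acts on $E_y\simeq G$ through a semisimple element $\tau$, and one needs a $\tau$-stable Borel through the $\tau$-fixed nilpotent $\phi(y)$; this is supplied by Kostant's equivariant Jacobson--Morozov theorem applied inside the reductive centraliser $G^\tau\subset G$, which produces a $\tau$-stable $\mathfrak{sl}_2$-triple and hence a $\tau$-stable Borel containing $\phi(y)$. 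The main obstacle I expect is precisely this equivariant patching at ramified fibres, together with the dimension bookkeeping in step (ii): verifying that the extra contributions coming from the fixed loci $\fg^{\Gamma_y},\fb^{\Gamma_y}$ cancel to reproduce the parahoric dimension formula $\dim\Bun_\cG=(g-1)\dim G+\#\{r\in R^+:\langle\theta,r\rangle\ne 0,1\}$ of \S 2.2. The need for this fine bookkeeping is what makes the result genuinely parahoric rather than a formal consequence of Ginzburg's original argument.
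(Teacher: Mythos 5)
Your setup is the right one and largely matches the paper: pass to $\Gamma$-equivariant bundles on the cover $Y$ via Balaji--Seshadri, and consider the correspondence variety of triples $(E,E^B,\phi)$ with $E^B$ an equivariant Borel reduction and $\phi$ an equivariant section of $E^B\times_B\fn\otimes\Omega_Y$; your surjectivity step (an equivariant Borel reduction exists for which a nilpotent $\phi$ lands in $\fn$) is exactly the technical heart of the paper's proof, which produces the reduction over the generic point of $Y$ and extends it to all of $Y$ by properness of $G/B$ (equivariance of the extension then comes for free from uniqueness, so no separate fixed-point analysis at the ramified fibres is needed). However, your steps (ii) and (iii) contain genuine gaps. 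The ``general principle'' in (iii) is false as stated: a closed conical substack of $T^*Z$ of dimension at most $\dim Z$ need not be isotropic. For $Z=\bA^2$ with coordinates $(x,y)$ and fibre coordinates $(\xi,\eta)$, the surface $\Lambda=\{(x,0,t,tx)\}$ is closed, conical and of dimension $2=\dim Z$, yet $\omega|_\Lambda=dt\wedge dx\neq 0$. The true statements in this circle are the converse implication (conical isotropic $\Rightarrow$ dimension $\le\dim Z$) and the equivalence of ``conical isotropic'' with the vanishing of the Liouville $1$-form; neither lets you deduce isotropy from a dimension bound.

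The dimension estimate in (ii) is also not a formal cancellation. At a point $E^B$ the fibre of your $\widetilde{\cN ilp}$ has dimension $h^0(E^B\times_B\fn\otimes\Omega_Y)^\Gamma=h^1(E^B\times_B\fg/\fb)^\Gamma$, and adding $\dim_{E^B}\Bun_Y^B=h^1(\fb_{E^B})^\Gamma-h^0(\fb_{E^B})^\Gamma$ and using the exact sequence $0\to\fb\to\fg\to\fg/\fb\to 0$ gives $\dim\Bun_\cG+h^0(E^B\times_B\fg/\fb)^\Gamma$, i.e.\ there is an excess term that does not vanish for all reductions; controlling it requires a stratified analysis in the style of Laumon or Faltings, which is considerably harder than what you sketch. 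The fix is to drop the dimension route entirely and conclude as Ginzburg (and the paper) does: your $\widetilde{\cN ilp}$ is precisely the variety $Y_f$ attached to $f:\Bun_{\cB}\to\Bun_\cG$, consisting of pairs $(E^B,\xi)$ with $\xi\in T^*\Bun_\cG$ pulling back to zero on $\Bun_\cB$; the tautological $1$-form of $T^*\Bun_\cG$ restricted to $Y_f$ agrees with the pullback of the tautological $1$-form of $T^*\Bun_\cB$ under the natural map, which lands in the zero section, so the $1$-form and hence the symplectic form vanish on $Y_f$ and on the smooth locus of its image $\cN ilp_\cG$. This gives isotropy with no dimension count; the bound $\dim\cN ilp_\cG\le\dim\Bun_\cG$ is then a consequence, not an input.
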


\begin{proof}
	We use the Balaji-Seshadri description of parahoric torsors as equivariant bundles (\S \ref{ss:equivariant}), and adapt the proof of \cite{Ginzburg} to this setting.  Thus,
	we will restrict ourselves to explaining the necessary modifications involved. Let $\cE$ be a $\mathcal{G} = \cG_{X,x,\theta}$-torsor on $X$
	and let $\phi \in \Gamma(X,ad(\cE)^* \otimes \Omega_X)$ be nilpotent. Let $\mathcal{B} \subset \cG$ be the Borel subgroup scheme,
	defined as the flat closure of $(X-x) \times B$ in
	$\cG$, for a Borel subgroup $B \subset G$.
	From Heinloth~\cite[Lemma 23]{Heinloth}, we have that the natural morphism
	$f: Bun_{\mathcal{B}} \rightarrow Bun_{\cG}$ is surjective.

	Now we can  find a finite Galois cover $p : Y \rightarrow X$,
	with Galois group $\Gamma$, such that the stack of $\Gamma$-equivariant principal
	$G$-bundles on $Y$ of a fixed local type determined by $\theta$, 
	is equivalent to the stack of $\cG$ torsors on
	$X$ \cite[Theorem 5.3.1]{BS}. As explained in \cite{BS}, if we choose one such equivariant bundle $\cF$ on $Y$, we have an isomorphism of group schemes on $X$:
	\[
	\mathfrak{R}^\Gamma_{Y/X}(Ad(\cF)) \cong \cG,
	\]
	where $Ad(\cF)$ denotes the adjoint bundle of groups associated to $\cF$ and $\mathfrak{R}^\Gamma_{Y/X}()$ denotes the invariant direct image functor.
	The equivalence of the categories between equivariant bundles on $Y$ of fixed local type
	and $\mathcal{G}$-torsors is obtained by
	\[
	\mathcal{F}^{'} \mapsto \mathfrak{R}_{Y/X}^{\Gamma}(\mathcal{F}^{'} \wedge^G \cF^{op}),
	\]
	where for any right $G$-torsors $E, F$ on $Y$, $E \wedge^G F^{op}$ denotes the contracted product \cite{BS}:
	\[
	E \wedge^G F^{op} := \frac{E \times_Y F}{(xg , y) \sim (x , yg^{-1})}.
	\]
	Next, we have a $\Gamma$-equivariant $B$ reduction $\cF_B$ of $\cF$, such
	that $\mathfrak{R}_{Y/X}^{\Gamma}(Ad(\cF_B)) \cong \mathcal{B}$. Let $\mathcal{E}^{'}$ be the equivariant bundle on $Y$, which corresponds to $\cE$
	under the equivalence mentioned above. Then we have an equivariant section 
	$\tilde{\phi}$ of $ad(\mathcal{E}^{'}) \otimes \Omega_Y$, which descends to $\phi$. As in \cite{Ginzburg}, there exists an equivariant $B$ reduction of $\mathcal{E}^{'}$ over the generic point
	of $Y$, for which $\tilde{\phi}$ is a section of $\mathfrak{n}_{\mathcal{E}^{'}} \otimes \Omega_Y$, where $\mathfrak{n}$ denotes the nilradical of the Borel subalgebra and
	$\mathfrak{n}_{\mathcal{E}^{'}}$ denotes the corresponding bundle of Lie algebras determined by the $B$ reduction of $\mathcal{E}^{'}$. Since $G/B$ is projective, we can extend this $B$ 
	reduction $\mathcal{E}^{'}_B$ of $\mathcal{E}^{'}$
    to the whole of $Y$. Thus we have a $\mathcal{B}$ reduction $\cE_{\mathcal{B}}$ of
    $\cE$ given by 
    \[
    \cE_{\mathcal{B}} = \mathfrak{R}_{Y/X}^{\Gamma}(\mathcal{E}^{'}_B \wedge^G \cF_B^{op})
    \] 
    such that
    over the generic point, $\phi \in \mathfrak{n}_{\cE_{\mathcal{B}}} \otimes \Omega_X$.
    As in \cite{Ginzburg}, this implies that $f^*(\phi) = 0 \in \Gamma(X,ad(\cE_{\mathcal{B}})^* \otimes \Omega_X)$
    as it vanishes on the generic point.   
    Following the notations of \cite[Lemma 3]{Ginzburg},  for $N_1 = Bun_{\mathcal{B}}$ and $N_2 = Bun_{\cG}$, we have
    $\cN ilp_{\cG} = pr_2(Y_f)$. The rest of the proof
    can be done exactly as in the proof of \cite[Lemma 5]{Ginzburg}.
\end{proof}

\subsection{Parahoric torsors form a very good stack}

Recall \cite[\S 1]{BD}, that a smooth equidimensional algebraic stack $\cY$ is said to be {\em good} if $\dim( T^* \cY) = 2 \dim( \cY)$ and is said to be {\em very good} if
\[
\text{codim}\{ y \in \mathcal{Y} \; | \; \dim( G_y ) = d \} > d \quad \text{for all } d > 0,
\]
where $G_y$ denotes the automorphism group of $y$. Equivalently $\mathcal{Y}$ is very good if $T^*\cY^0$ is dense in $T^*\cY$, where $\cY^0$ is the largest Deligne-Mumford substack of $\cY$. Note that very good implies good. Using the fact that $\cN ilp_{\cG}$ is isotropic, we prove:
\begin{thm}\label{t:verygood}
The stack $Bun_{\cG}$ is very good.
\end{thm}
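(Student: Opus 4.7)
My plan is to adapt the Beilinson--Drinfeld argument, originally for $\cG = G \times X$, to the parahoric setting using two inputs: the isotropy of $\cN ilp_\cG$ established in the previous theorem, and the equivariant-bundle description from \S\ref{ss:equivariant}. The equivariant point of view is crucial because it naturally forces us to consider reductive groups beyond the simple simply connected case (Levi subgroups), which is exactly what is needed to close the induction; this is the content of the remark in \S\ref{ss:equivariant} that $G$ may be taken to be any connected reductive group.

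For each integer $d \ge 1$, set $Z_d := \{\cE \in \Bun_\cG : \dim \on{Aut}(\cE) = d\}$, a locally closed substack. Very goodness demands $\on{codim}_{\Bun_\cG}(Z_d) > d$. First I would pass to the Galois cover $p : Y \to X$ and identify $\Bun_\cG \cong \Bun_Y^\tau(\Gamma, G)$, under which $\on{Aut}(\cE)$ becomes the $\Gamma$-invariants of the automorphism group of the corresponding equivariant $G$-bundle $\cE'$. I would then induct on the semisimple rank of $G$, allowing $G$ to range over all connected reductive groups. The base case of a torus is immediate: $\Bun_Y^\tau(\Gamma, T)$ is a gerbe over an abelian variety, so every automorphism group has the same dimension and the higher strata $Z_d$ are empty.

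For the inductive step, suppose $\dim \on{Aut}^\Gamma(\cE') \ge 1$. Pick a nonzero $\Gamma$-invariant element $\xi \in \on{Lie}\,\on{Aut}^\Gamma(\cE')$ and form its $\Gamma$-invariant Jordan decomposition $\xi = \xi_s + \xi_n$. If, over a generic point of the stratum, $\xi_s \ne 0$ can be arranged, then its centraliser determines a canonical reduction of $\cE'$ to a proper Levi subgroup $L \subsetneq G$ compatible with $\Gamma$; stratifying $Z_d$ by Levi type and combining the inductive codimension bound on the corresponding Levi stack with an equivariant dimension count (using Riemann--Roch and the dimension formula for $\Bun_\cG$ from \S\ref{ss:parabolicBundles}) yields the required inequality. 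In the complementary case, every element of $\on{Lie}\,\on{Aut}^\Gamma(\cE')$ is unipotent, and here the isotropy of $\cN ilp_\cG$ enters: for unipotent $\xi$ the map $\eta \mapsto \on{ad}(\xi)(\eta)$ produces a positive-dimensional family of nilpotent Higgs fields over each point of the stratum. Combining $\dim \cN ilp_\cG \le \dim \Bun_\cG$ with the cotangent-fibre dimension $\dim H^0(X, \on{ad}^*(\cE) \otimes \Omega) = \dim \Bun_\cG + d$ (Riemann--Roch applied to $\on{ad}(\cE)$) then forces $\on{codim}(Z_d) > d$.

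I expect the unipotent case to be the main obstacle. Levi reduction converts the argument into an essentially routine equivariantization of Beilinson--Drinfeld's classical analysis, but the unipotent stratum has no smaller reductive group to descend to, and the strict codimension inequality must be extracted from the isotropy of $\cN ilp_\cG$ alone, via a careful comparison of the fibre dimension of $T^*\Bun_\cG \to \Bun_\cG$ over $Z_d$ against the global bound on $\dim \cN ilp_\cG$. A secondary technical complication is keeping Jordan decompositions, Levi reductions, and Riemann--Roch contributions all compatible with the $\Gamma$-equivariance and the prescribed local type $\tau$ at $x$; the equivariant framework renders these tractable, but each ingredient requires verification in the equivariant category.
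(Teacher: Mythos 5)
Your overall strategy is the paper's own: pass to $\Gamma$-equivariant bundles on a Galois cover, enlarge the class of groups to connected reductive ones, split an infinitesimal automorphism into semisimple and nilpotent parts, reduce the semisimple case to a Levi, and use the isotropy of the nilpotent cone for the purely nilpotent case. However, the two steps you defer are exactly where the work lies, and as written both fail. First, your base case is false: for a torus $T$, \emph{every} point of $\Bun_Y^\tau(\Gamma,T)$ has automorphism group of dimension $\dim T>0$, so the stratum $Z_{\dim T}$ is the whole stack and has codimension $0\not>\dim T$; thus $\Bun_T$ is \emph{not} very good and an induction over all connected reductive groups cannot start. The paper never asserts very goodness for a group with positive-dimensional centre: it fibres the semisimple-part stratum $A_h(G)$ over $A_0(G^h/Z^h)$ with fibre $\Bun_X(Z^h)$, absorbs the central contribution $(g-1)\dim(\mathfrak{z}^h)$ using the inequality $\dim(\mathfrak{g}/\mathfrak{g}^h)\ge 2\dim(\mathfrak{z}^h)>\dim(\mathfrak{z}^h)$ valid for $h\ne 0$ in a semisimple $\mathfrak{g}$, and only ever invokes the ``nilpotent'' inequality (\ref{41}) for the smaller group $G^h/Z^h$. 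You would need to reorganise your induction accordingly, e.g.\ inducting only on the statement (\ref{41}).

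Second, the unipotent case does not close as stated. The map $\eta\mapsto\mathrm{ad}(\xi)(\eta)$ is not the relevant construction ($[\xi,\eta]$ need not be nilpotent, and it is unclear what $\eta$ ranges over); what is needed is to tensor a nilpotent section $s\in H^0_\Gamma(Y,\mathrm{ad}(E))$ with a holomorphic $1$-form $\xi$ to manufacture nilpotent Higgs fields $s\xi$. More seriously, ``a positive-dimensional family of nilpotent Higgs fields over each point of $Z_d$'' combined with $\dim(\cN ilp_\cG)\le\dim(\Bun_\cG)$ yields only $\operatorname{codim}(Z_d)\ge 1$, and the cotangent-fibre dimension $\dim(\Bun_\cG)+d$ is of no direct use since the full cotangent fibre does not lie in the nilpotent cone. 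To conclude $\operatorname{codim}(Z_d)>d$ you must produce a family of nilpotent Higgs fields of dimension strictly greater than $d$ over each point of $Z_d$, i.e.\ control the image of the multiplication map $H^0_\Gamma(Y,\mathrm{ad}(E))\times H^0(X,\Omega_X)\to\cN ilp_\cG$, which is not automatic. The paper's device is to embed an irreducible $V\subseteq A_0(G)$ (already of dimension $\dim(Z_d)+d$ over $Z_d$) into the nilpotent cone by a single $m_\xi:(E,s)\mapsto(E,s\xi)$, giving the non-strict bound, and then to rule out equality by observing that equality would force every Higgs field in the relevant component to vanish on the zero divisor of every $\xi$, hence to vanish identically, so that $V\subseteq A^0(G)$. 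Some such mechanism for strictness is the essential missing ingredient in your sketch.
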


\begin{proof}
As mentioned in \S \ref{ss:equivariant}, it is more convenient to work with equivariant principal bundles of type $\tau$ on a Galois cover $p : Y \to X$, as opposed to parahoric bundles on $X$. We will allow $G$ to be a connected semisimple group throughout this proof. We will prove that $Bun_Y^\tau(\Gamma , G)$ is very good by adapting the argument used by Beilinson-Drinfeld \cite[\S 2.10.5]{BD} in the non-equivariant setting. The main difference is that we need to be more careful in estimating the dimensions of various spaces involved in the proof. We also need the fact that the nilpotent cone in $T^*Bun_Y^\tau(\Gamma , G)$ is isotropic for any semisimple $G$, which was proved in the previous subsection. 

We must show that $Bun_Y^\tau(\Gamma , G)$ satisfies:
\[
\text{codim}\{ E \in Bun_Y^\tau(\Gamma , G) \; | \; \dim(H^0_\Gamma(Y , ad(E) ) = d \} > d \quad \text{for all } d > 0,
\]
Where $H^0_\Gamma( Y , ad(E) )$ denotes the space of $\Gamma$-equivariant sections of $ad(E)$. This is equivalent to showing that
\begin{equation}\label{39}
\dim( A(G) \setminus A^0(G) ) < \dim( Bun_Y^\tau(\Gamma , G) ),
\end{equation}
where $A(G)$ is the stack of pairs $(E , s )$, $E \in Bun_Y^\tau(\Gamma , G)$, $s \in H^0_\Gamma( Y , ad(E) )$ and $A^0(G) \subseteq A(G)$ is the closed substack of pairs with $s = 0$.

Let $\fc = Spec( \mathbb{C}[\mathfrak{g}]^G)$ be the affine space whose coordinate ring is the ring of invariant polynomials on $\mathfrak{g}$. Recall that
\[
\fc = \mathfrak{g}/\!/G = \mathfrak{t}/W,
\]
where $W$ is the Weyl group. The morphism $\mathfrak{g} \to \fc$ induces a map $H^0_\Gamma( Y , ad(E)) \to Mor( Y , \fc) = \fc$, in other words, we apply invariant polynomials to $s \in H^0_\Gamma( Y , ad(E))$. This gives a natural map $f : A(G) \to \fc$.

For $h \in \mathfrak{t}$, let $\overline{h} \in \fc$ be the image of $h$ under $\mathfrak{t} \to \fc$ and set $A_h(G) = f^{-1}( \overline{h})$. Set $G^h = \{ g \in G \; | \; Ad_g h = h\}$ and $\mathfrak{g}^h = Lie(G^h) = \{ a \in \mathfrak{g} \; | \; [a,h] = 0\}$. Note that $h \in \mathfrak{t}$ is semisimple and so $G^h$ is reductive. In fact $G^h$ is the Levi of a parabolic subgroup of $G$. Denote by $\mathfrak{z}^h$ the center of $\mathfrak{g}^h$. Since $h \in \mathfrak{z}^h$ and there are a finite number of subalgebras of $\mathfrak{g}$ of the form $\mathfrak{z}^h$ as $h$ varies over $\mathfrak{t}$, (\ref{39}) follows from the inequality $\dim( A_h(G) \setminus A^0(G) ) < \dim(Bun_Y^\tau(\Gamma , G) ) - \dim( \mathfrak{z}^h )$. So it is enough to show that
\begin{equation}\label{40}
\dim( A_h(G) ) < \dim( Bun_Y^\tau(\Gamma , G) ) - \dim( \mathfrak{z}^h ) \quad \text{for } h \neq 0
\end{equation}
\begin{equation}\label{41}
\dim( A_0(G) \setminus A^0(G) ) < \dim( Bun_Y^\tau(\Gamma , G) ) .
\end{equation}
Consider $(E , s ) \in A_h(G)$. Then $s \in H^0_\Gamma( Y , ad(E) )$ and $s$ maps to $\overline{h} \in \fc$. Let us write $s = s_s + s_n$, where $s_s$ is semisimple, $s_n$ is nilpotent and $[s_s , s_n] = 0$. With respect to a local trivialisation of $E$ around $y$, we may identify $s_s(y)$ with an element of $\mathfrak{g}$. We may choose the trivialization so that the generator of the stabilizer group $\Gamma_y$ acts as left multiplication by $\tau$. By $\Gamma$-equivariance of $s$, we have that $Ad_{\tau}( s_s(y) ) = s_s(y)$. Choosing a different trivialisation will change $\tau$ and $s_s(y)$ by conjugation. Next we note that since $s_s(y)$ maps to $\overline{h} \in C \cong \mathfrak{t}/W$, we can choose a local trivialisation in which $s_s(y) = h$. Replacing $\tau$ by a conjugate of $\tau$ if necessary, we obtain a trivialization in which $s_s(y) = h$ and the generator of $\Gamma_y$ acts as left multiplication by $\tau$. In this trivialisation the equation $Ad_{\tau}(s_s(y)) = s_s(y)$ becomes $Ad_{\tau}(h) = h$, so $\tau \in G^h$. Denote by $Z^h$ the center of $G^h$. Note that different trivializations in which $s_s(y) = h$ differ by elements of $G^h$. Thus $\tau$ is determined up to conjugation in $G^h$. Therefore, the conjugacy class of $\tau$ in $G^h$ and also in $G^h/Z^h$ is well-defined independent of our choice of local trivialization. Thus it makes sense to speak of the stacks $Bun_Y^\tau( \Gamma , G^h), Bun_Y^\tau(\Gamma , G^h/Z^h)$ and also of $A_h( G^h ), A_0(G^h), A_0(G^h/Z^h)$. We will now argue that (\ref{40}) follows from (\ref{41}) with $G$ replaced by $G^h/Z^h$.

Next we observe that since $h \in \mathfrak{z}^h$, there is an obvious isomorphism $A_0(G^h) \cong A_h(G^h)$ and a further isomorphism $A_h( G^h) \cong A_h(G)$. The latter follows from the fact that if $(E , s) \in A_h(G)$, then $s_s$, the semisimple part of $s$ defines a reduction of structure group to $G^h$.

Observe that there is a natural morphism $\varphi : A_0(G^h) \to A_0(G^h/Z^h)$. A non-empty fibre of $\varphi$ is isomorphic to $Bun_Y^e(\Gamma , Z^h) \cong Bun_X(Z^h)$, where $e$ denotes the identity element of $Z^h$. So
\[
\dim( A_h(G) ) \le \dim( Bun_X(Z^h) ) + \dim( A_0( G^h/Z^h)).
\]
Let us define $e(\theta)$ and $e(\theta , h)$ by:
\begin{equation*}
\begin{aligned}
e(\theta) &= \# \{ r \in R^+ \; | \; \langle \theta , r \rangle \neq 0,1 \}, \\
e(\theta, h) &= \# \{ r \in R^+ \; | \; \langle \theta , r \rangle \neq 0,1 \text{ and } \langle h , r \rangle = 0 \}
\end{aligned}
\end{equation*}
Then one can show \cite{BS} that:
\begin{equation*}
\begin{aligned}
\dim( Bun_Y^\tau(\Gamma , G )) &= (g-1)\dim(G) + e(\theta), \\
\dim( Bun_Y^\tau(\Gamma , G^h/Z^h)) &= (g-1)\dim(G^h/Z^h) + e(\theta , h).
\end{aligned}
\end{equation*}
Now since $\dim( Bun_X(Z^h)) = (g-1)\dim(\mathfrak{z}^h)$ and (\ref{41}) implies that $\dim( A_0(G^h/Z^h)) < (g-1)\dim( \mathfrak{g}^h / \mathfrak{z}^h) + e(\theta , h)$, we get:
\begin{equation*}
\begin{aligned}
\dim( A_h(G) ) &\le (g-1)\dim(\mathfrak{z}^h) + (g-1)\dim( \mathfrak{g}^h / \mathfrak{z}^h) + e(\theta , h) \\
&= (g-1)\dim(G) - (g-1)\dim( \mathfrak{g}/\mathfrak{g}^h) + e(\theta , h) \\
&\le (g-1)\dim(G) + e(\theta) - (g-1)\dim( \mathfrak{g}/\mathfrak{g}^h) \\
&= \dim( Bun_Y^\tau(\Gamma , G)) - (g-1)\dim( \mathfrak{g}/\mathfrak{g}^h).
\end{aligned}
\end{equation*}
But $\dim(\mathfrak{g}/\mathfrak{g}^h) \ge 2 \dim(\mathfrak{z}^h) > \dim(\mathfrak{z}^h)$, if $h \neq 0$. This shows (\ref{40}).\\

To prove (\ref{41}) we will show that if $V \subseteq A_0(G)$ is a locally closed reduced irreducible substack, then $\dim(V) \le \dim( Bun_Y^\tau(\Gamma , G))$ and $\dim(V) = \dim( Bun_Y^\tau(\Gamma , G))$ if and only if $V \subseteq A^0(G)$. Given such a $V$, let $A_0(G)_V$ denote the irreducible component of $A_0(G)$ containing $V$. Let $\cN ilp$ denote the nilpotent cone in $Bun_Y^\tau(\Gamma , G)$. We have shown that $\cN ilp$ is isotropic, hence $\dim(\cN ilp) \le \dim( Bun_Y^\tau(\Gamma , G))$.\\

For $\xi \in H^0_\Gamma( Y , \Omega_Y) \cong H^0(X , \Omega_X)$ consider the morphism $m_\xi : A_0(G)_V \to \cN ilp$ defined by $(E , s) \mapsto (E , s \xi)$. The morphisms $m_\xi$ define $m : A_0(G)_V \times H^0( X , \Omega) \to \cN ilp$. The image of $m$ is contained in some locally closed reduced irreducible substack $Z \subseteq \cN ilp$. If $\xi \neq 0$ then $m_\xi$ induces an embedding $V \to Z_\xi$, where $Z_\xi$ is the closed substack of $Z$ consisting of pairs $(E, \phi)$ such that the restriction of $\phi$ to the subspace $D_\xi = \{ y \in Y \; | \; \xi(y) = 0\}$ is zero. So $\dim(V) \le \dim(Z_\xi) \le \dim(Z) \le \dim(\cN ilp) \le \dim( Bun_Y^\tau(\Gamma , G))$. If $\dim(V) = \dim( Bun_Y^\tau(\Gamma , G))$, then $Z_\xi = Z$ for all non-zero $\xi \in H^0(X , \Omega_X)$. This means that $\phi = 0$ for all $(E , \phi) \in Z$ and therefore $s = 0$ for all $(E , s) \in V$, i.e. $V \subseteq A^0(G)$.
\end{proof}

\section{Poisson commutativity and Lagrangian fibration} 
We continue using the notation of the previous section. Thus, $\cG$ is a parahoric group scheme and $h_\cG: \mathcal{M}_{\cG} \ra \cA_{\cG, X, x}$ is the parahoric Hitchin map.

\subsection{Poisson commutativity}
Note that $h^*_\cG$ is a map 
\[
h^*_\cG: \bC[\cA_{\cG}]\ra \Gamma(\cM_{\cG},\cO).
\]

\begin{thm}\label{t:poissoncommute}
The image of the pullback $h^*_\cG$ consists of Poisson commuting functions. 
\end{thm}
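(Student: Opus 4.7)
The plan is to adapt Bottacin's argument \cite{Bot} to the parahoric setting. The essential input is Theorem \ref{t:verygood}: the very good property guarantees that $\cM_\cG = T^*\Bun_\cG$ carries a Poisson structure in the classical sense, with a dense open Deligne--Mumford substack $\cM_\cG^0$ on which ordinary symplectic geometry applies. Since vanishing of a Poisson bracket is a closed condition, it suffices to verify commutativity of pullbacks on $\cM_\cG^0$. Moreover, because $\bC[\cA_\cG]$ is generated as an algebra by linear functions, and the Poisson bracket is a derivation in each argument, it is enough to treat linear pullbacks. For each invariant polynomial $P$ of degree $d$ and each linear functional $\xi$ on $H^0(X, \Omega^d(d \cdot x))$, set
\[
f_{P,\xi}(\cE, \phi) := \langle \xi, P(\phi) \rangle,
\]
so that the goal reduces to showing $\{ f_{P,\xi}, f_{P', \xi'} \} = 0$ for any two such functions.

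The main computation takes place at a smooth point $(\cE, \phi) \in \cM_\cG^0$. Following \S\ref{ss:equivariant}, I would pass to the equivariant Higgs bundle model: $(\cE, \phi)$ corresponds to an equivariant pair $(\cE', \tilde\phi)$ on a Galois cover $p \colon Y \to X$. At such a point the tangent space is the first $\Gamma$-equivariant hypercohomology of the two-term complex
\[
\ad(\cE') \xrightarrow{[\tilde\phi,\cdot]} \ad(\cE') \otimes \Omega_Y,
\]
and the symplectic form is induced by $\Gamma$-equivariant Serre duality on $Y$. A direct computation, parallel to the classical Hitchin case, shows that the Hamiltonian vector field $X_{f_{P, \xi}}$ is represented in this hypercohomology by an infinitesimal gauge transformation $\alpha := \xi \cdot dP_{\tilde\phi}$, where $dP_{\tilde\phi}$ is the derivative of $P$ at $\tilde\phi$; the resulting infinitesimal change in $\tilde\phi$ under the flow is the bracket $[\alpha, \tilde\phi]$. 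The crucial observation, which is an immediate consequence of the $G$-invariance of $P$, is the identity
\[
[dP_{\tilde\phi}, \tilde\phi] = 0,
\]
and hence $[\alpha, \tilde\phi] = 0$. Therefore the flow of $X_{f_{P,\xi}}$ preserves $P'(\tilde\phi)$ pointwise for every other invariant polynomial $P'$, so
\[
\{ f_{P, \xi}, f_{P', \xi'} \} = X_{f_{P, \xi}} (f_{P', \xi'}) = 0.
\]

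The hard part will be ensuring that this local computation of the Hamiltonian vector field transfers cleanly to the $\Gamma$-equivariant setting on $Y$ --- in particular, that the equivariant Serre duality correctly pairs with linear functionals on $H^0(X, \Omega^d(d \cdot x))$, and that $\alpha$ descends to a well-defined section of $\ad(\cE)$ on $X$ with the prescribed pole behaviour at the ramification point, compatibly with the description $\tp_\theta^\perp \subset t^{-1} \fg(\cO_x)$ from \S\ref{ss:HiggsBundles}. These are essentially bookkeeping issues requiring care with the parahoric weight $\theta$ at $x$, but I expect no new conceptual ingredient beyond Bottacin's original argument.
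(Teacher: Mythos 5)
Your proposal is correct and follows essentially the same route as the paper: reduce to the Deligne--Mumford locus via the very good property, represent tangent vectors by pairs in the hypercohomology of $\ad(\cE)\xrightarrow{[\phi,\cdot]}\ad(\cE)^*\otimes\Omega$, and observe that the Hamiltonian vector field of $f_{P,\xi}$ has vanishing Higgs-field component because $\mathrm{ad}$-invariance of $P$ kills the bracket $[\phi,\alpha]$, so that functions of $\phi$ alone are preserved. The only difference is that you pass to the $\Gamma$-equivariant picture on the Galois cover $Y$, whereas the paper does this computation in Dolbeault cohomology directly on $X$ (reserving the equivariant version for the proof of Theorem~\ref{t:fibres}), which sidesteps the descent bookkeeping you flag at the end.
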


 Our proof is analytic as we will make use of Dolbeault cohomology. One can alternatively give a purely algebraic proof by replacing instances of Dolbeault cohomology with \v{C}ech cohomology. This would lead to a proof along the same lines as given by Bottacin for parabolics of type $A$ \cite{Bot}. However, we found the analytic approach to be more straightforward.
 
\begin{proof} 
Let $(\cE,\phi)$ be a parahoric Higgs bundle. Thus $\phi \in H^0( X , ad(\cE)^* \otimes \Omega )$. If $\alpha$ is a section of $ad(\cE)$ and $\beta$ is a section of $ad(\cE)^*$, then $[\alpha , \beta]$ is a section of $ad(\cE)^*$. We therefore have the following two-term complex:
\begin{equation*}\xymatrix{
ad(\cE) \ar[r]^-{[\phi , \, . \, ]} & ad(\cE)^* \otimes \Omega.
}
\end{equation*}
Borrowing notation from \cite{Bot}, we denote this complex by $[ \phi , \, . \, ]$.

For any parahoric Higgs bundle $(\cE , \phi)$, we have that $\mathbb{H}^0( X , [\phi , \, . \, ])$ is the Lie algebra of infinitesimal automorphisms of $(\cE,\phi)$. Observe that the dual complex $Hom( [\phi , \, . \, ] , \bC)$ tensored by $\Omega$ is canonically isomorphic to $[\phi , \, . \, ]$. Thus, by the extension of Serre duality to hypercohomology, we see that $\mathbb{H}^j(X , [ \phi , \, . \, ] ) \cong \mathbb{H}^{2-j}(X , [ \phi , \, . \, ] )^*$. Therefore $\mathbb{H}^0(X , [\phi , \, . \, ] ) =0$ implies that $\mathbb{H}^2( X , [\phi , \, . \, ] ) = 0$ as well. Thus if $\mathbb{H}^0( X , [\phi , \, . \, ]) = 0$, then around $(\cE , \phi)$, the moduli stack is smooth, Deligne-Mumford and $\mathbb{H}^1( X , [\phi , \, . \, ] )$ is the tangent space at $(\cE,\phi)$. Moreover, Serre duality gives a symplectic pairing on $\mathbb{H}^1( X , [ \phi , \, . \, ] )$, which coincides with the canonical symplectic form $\omega$ on the moduli stack of Higgs bundles.

In Dolbeault cohomology an element of $\mathbb{H}^1(X , [\phi , \, . \, ])$ is represented by a pair $( \alpha , \beta )$, where $\alpha \in \Omega^{0,1}(X , ad(\cE) )$ and $\beta \in \Omega^{0,0}(X , ad(\cE)^* \otimes \Omega)$ satisfying:
\begin{equation*}
\overline{\partial}_{\cE} \beta = [\phi , \alpha],
\end{equation*}
where $\overline{\partial}_{\cE}$ denotes the $\overline{\partial}$-operator on $ad(\cE)^*$. Moreover, the symplectic form $\omega$ on the moduli space of Higgs bundles is given by:
\begin{equation}\label{eq:symplecticform}
\omega( (\alpha_1 , \beta_1) , (\alpha_2 , \beta_2) ) = \int_X \kappa(\alpha_1 , \beta_2 ) - \kappa( \alpha_2 , \beta_1),
\end{equation}
where $\kappa$ denotes the Killing form. This is clear since the above formula is simply the Dolbeault realisation of Serre duality.

Now let $Bun^0_{\cG}$ be the substack of parahoric torsors $\cE$ with $H^0( X , ad(\cE) ) = 0$, i.e. $Bun^0_{\cG}$ is the largest Deligne-Mumford substack of $Bun_{\cG}$. As $Bun_{\cG}$ is very good, we have that  $T^*Bun^0_{\cG}$ is dense in $T^*Bun_{\cG}$. Thus, to show Poisson commutativity of functions in the image of $h^*_\cG$, it is enough to show such functions Poisson commute on $T^*Bun^0_{\cG}$. We therefore restrict attention to parahoric Higgs bundles $(\cE , \phi)$ with $H^0( X , ad(\cE) ) = 0$. This also implies that $\mathbb{H}^0( X , [\phi , \, . \, ]) = 0$.

Let $\rho$ be an invariant polynomial on $\mathfrak{g}$ of degree $d_\rho$. Then for any parahoric Higgs bundle $(\cE,\phi)$ we can regard $\rho(\phi)$ as a holomorphic section of $\Omega^{d_\rho}(d_\rho x)$. Let $\mu \in H^1( X , \Omega^{1-d_\rho}(-d_\rho x))$. By Serre duality we can pair $\rho(\phi)$ with $\mu$ to get a complex number 
\begin{equation*}
f_{\rho , \mu}(\cE,\phi) = \langle \rho(\phi) , \mu \rangle = \int_X \rho(\phi) \mu \in \mathbb{C}.
\end{equation*}
This defines a regular function $f_{\rho , \mu}$ on the Hitchin base and such functions generate the ring of all regular functions on the base. Thus we just need to show that any two such functions $f_{\rho , \mu} , f_{\rho' , \mu'}$ Poisson commute.

To prove Poisson commutativity, we work out the Hamiltonian vector field $X_{\rho , \mu}$ associated to $f_{\rho , \mu}$. By definition this is the vector field whose value at $(\cE,\phi)$ satisfies
\begin{equation}\label{eq:ham}
df_{\rho , \mu}(\cE,\phi)(Y) = \omega( X_{\rho , \mu}(\cE,\phi) , Y)
\end{equation}
for all tangent vectors $Y$ at $(\cE,\phi)$.
Suppose we represent $Y$ in Dolbeault cohomology as $( \alpha_Y , \beta_Y)$ and $X_{\rho , \mu}(\cE,\phi)$ as $(\alpha , \beta)$. Differentiating $\int_X f_{\rho , \mu}(\phi) \mu$ in the $Y$-direction gives
\begin{equation*}
df_{\rho , \mu}(\cE,\phi)(Y) = \int_X d_\rho \rho( \phi , \dots , \phi , \beta_Y) \mu.
\end{equation*}
From (\ref{eq:symplecticform}), we also have:
\begin{equation*}
\omega( X_{\rho , \mu}(\cE,\phi) , Y) = \int_X \kappa( \alpha , \beta_Y) - \kappa( \alpha_Y , \beta).
\end{equation*}
Comparing these expressions, we see that Equation (\ref{eq:ham}) will be satisfied if we choose $\beta = 0$ and let $\alpha$ be defined by:
\begin{equation*}
\kappa( \alpha , \, \_ \, ) = d_\rho \rho( \phi , \phi , \dots , \phi , \, \_ \, ) \mu.
\end{equation*}
In order for this to be a representative in Dolbeault cohomology, we need to check that $(\alpha , \beta)$ chosen in this way is a cocycle, i.e. $[\phi , \alpha] = 0$. For all sections $y$ of $ad(\cE)$, we have:
\begin{equation*}
\begin{aligned}
\kappa( [\phi , \alpha] , y ) &= -\kappa( \alpha , [\phi , y]) \\
&= -d_\rho \rho( \phi , \phi , \dots , \phi , [\phi , y]) \mu \\
&= 0,
\end{aligned}
\end{equation*}
where in the last line we use $ad$-invariance of $\rho$. Hence $[\phi , \alpha] = 0$, as required.

We have shown that the Hamiltonian vector field $X_{\rho , \mu}$ of $f_{\rho , \mu}$ evaluated at the point $(\cE,\phi)$ may be represented by a Dolbeault cocycle of the form $( \alpha_{\rho , \mu} , 0)$. Similarly if we have another such function $f_{\rho' , \mu'}$ then its Hamiltonian vector field $X_{\rho' , \mu'}$ evaluated at $(\cE,\phi)$ may be represented in the form $( \alpha_{\rho' , \mu'} , 0)$. From the definition of the Poisson bracket we have:
\begin{equation*}
\begin{aligned}
\{ f_{\rho , \mu} , f_{\rho' , \mu'} \}(\cE,\phi) &= \omega( X_{\rho , \mu}(\cE,\phi) , X_{\rho',\mu'}(\cE,\phi) ) \\
&= \int_X \kappa( \alpha_{\rho , \mu} , 0) - \kappa( \alpha_{\rho',\mu'} , 0) \\
&= 0.
\end{aligned}
\end{equation*}

This proves Poisson commutativity of the functions $\{ f_{\rho , \mu} \}_{\rho , \mu}$ and hence any two functions in the image of the pullback $h^*_\cG$ will Poisson commute.
\end{proof}

\begin{rem} The only fact about parahoric group schemes used in the above proof is that $\Bun_\cG$ is very good. In other words, Poisson commutativity property of the Hitchin map holds for any integral model $\cG$ such that $\Bun_\cG$ is very good. 
\end{rem} 

\subsection{Lagrangian fibration} In this subsection, we prove that the parahoric Hitchin map 
\[
h_\cG:T^*\Bun_{\cG} \ra \cA_{\cG}
\]
 defines a Lagrangian fibration. More precisely, we have: 

\begin{thm}
\begin{itemize}
\item[(i)]{Any irreducible component of any fibre of $h_\cG$ has dimension $\dim( Bun_{\cG})$. In particular, $\cN ilp_{\cG}$ is a Lagrangian substack of $\cM_{\cG}$.}
\item[(ii)]{The image $\cA_{\cG}$ is irreducible and has dimension $\dim( Bun_{\cG})$.} 
\end{itemize}
\end{thm}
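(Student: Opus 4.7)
The plan is to combine the two main theorems already established in the paper with a $\bC^*$-scaling argument. I treat (i) first, then derive (ii) from it.

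For (i), the lower bound on fibre dimensions comes from Theorem \ref{t:poissoncommute}: since $h_\cG^*$ lands in the Poisson centre of $\Gamma(\cM_\cG,\cO)$ and $\cM_\cG=T^*\Bun_\cG$ is symplectic of dimension $2\dim\Bun_\cG$ (by the very good property of Theorem \ref{t:verygood}), every fibre of $h_\cG$ is coisotropic, so each irreducible component has dimension at least $\dim\Bun_\cG$. For the matching upper bound, I would invoke the natural $\bC^*$-action $(\cE,\phi)\mapsto(\cE,t\phi)$ on $\cM_\cG$. This action is intertwined, via $h_\cG$, with the linear $\bC^*$-action on $\bigoplus_i\Gamma(X,\Omega^{d_i}(d_i x))$ that scales the $i$-th summand by $t^{d_i}$; this action preserves $\cA_\cG$ and contracts it to the origin as $t\to 0$. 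Thus for any $a\in\cA_\cG$, the orbit closure $\overline{\bC^*\cdot a}\subset\cA_\cG$ passes through $0$, and the family of fibres of $h_\cG$ over this orbit closure degenerates $h_\cG^{-1}(a)$ into a subscheme of $h_\cG^{-1}(0)=\cN ilp_\cG$. Upper semicontinuity of fibre dimension then gives $\dim h_\cG^{-1}(a)\leq\dim\cN ilp_\cG$.

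Combining this with the earlier theorem that $\cN ilp_\cG$ is isotropic in $\cM_\cG$ — hence each of its components has dimension at most $\tfrac{1}{2}\dim\cM_\cG=\dim\Bun_\cG$ — we sandwich every component of every fibre to dimension exactly $\dim\Bun_\cG$. In particular $\cN ilp_\cG$ is of Lagrangian dimension and isotropic, so it is Lagrangian.

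For (ii), the smoothness of $\Bun_\cG$ together with its connectedness (which, for $G$ simple simply connected, follows via the Balaji--Seshadri equivalence of \S\ref{ss:equivariant} from connectedness of $\Bun_Y^\tau(\Gamma,G)$) implies $\Bun_\cG$ is irreducible, and therefore so is $\cM_\cG=T^*\Bun_\cG$. Since $\cA_\cG$ is by definition the closure of $h_\cG(\cM_\cG)$, it is irreducible as the closure of an irreducible constructible set. For the dimension, part (i) shows that all non-empty fibres of $h_\cG$ over $\cA_\cG$ are equidimensional of dimension $\dim\Bun_\cG$, so the fibre-dimension formula on a dense open of $\cA_\cG$ over which $h_\cG$ is surjective yields $\dim\cA_\cG=\dim\cM_\cG-\dim\Bun_\cG=\dim\Bun_\cG$.

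The main obstacle I anticipate is making the $\bC^*$-contraction and semicontinuity step rigorous on stacks: one has to formulate upper semicontinuity of fibre dimension for $h_\cG$ pulled back along $\bA^1\to\cA_\cG$ parametrising the contracting orbit, and this is cleanest to set up on the dense Deligne--Mumford locus $T^*\Bun_\cG^0$ guaranteed by the very good property, then extend by density. A secondary care-point is the irreducibility of $\Bun_\cG$, which I would handle uniformly by reducing to the equivariant-bundle model of \S\ref{ss:equivariant}.
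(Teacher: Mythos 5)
Your overall skeleton is the paper's: irreducibility of $\cA_\cG$ from irreducibility of $\cM_\cG=T^*\Bun_\cG$ (very good $+$ connected), the upper bound on fibre components by $\bC^*$-contracting any fibre into $\cN ilp_\cG$ and using that $\cN ilp_\cG$ is isotropic, the lower bound from coisotropy of fibres of a Poisson map, and $\dim\cA_\cG$ from the fibre-dimension formula. However, there is one step where you claim strictly more than you can justify, and it is exactly the point where the paper organises the argument differently. You assert that \emph{every} fibre of $h_\cG$ is coisotropic and hence that \emph{every} irreducible component of \emph{every} fibre has dimension at least $\dim\Bun_\cG$, and you make this the engine of part (i), deducing (ii) afterwards. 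Coisotropy in the needed sense (tangent space containing its symplectic orthogonal at a smooth point of the component) is unproblematic only on the dense Deligne--Mumford locus $T^*\Bun_\cG^0$ and at smooth points of the fibre; a priori an irreducible component of a singular fibre could be contained in the complement of $T^*\Bun_\cG^0$ (whose dimension, while less than $2\dim\Bun_\cG$, need not be less than $\dim\Bun_\cG$), and ``extend by density'' does not reach such a component. Also, a small but real slip: Theorem \ref{t:poissoncommute} says the image of $h_\cG^*$ consists of \emph{pairwise Poisson-commuting} functions, not that it lies in the Poisson centre of $\Gamma(\cM_\cG,\cO)$ --- the latter would force the Hamiltonian vector fields to vanish.

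The paper sidesteps this by using coisotropy only for a \emph{generic non-singular} fibre $F$ (where the issue does not arise) to get $\dim F\ge\dim\Bun_\cG$, and by obtaining the lower bound for components of \emph{arbitrary} fibres from the elementary inequality $\dim(\text{component})\ge\dim\cM_\cG-\dim\cA_\cG$, valid for any dominant morphism with irreducible source. This forces the paper's ordering: first the sandwich $\dim\Bun_\cG\le\dim F\le\dim\cN ilp_\cG\le\dim\Bun_\cG$ for generic $F$, then $\dim\cA_\cG=2\dim\Bun_\cG-\dim F=\dim\Bun_\cG$ (your part (ii)), and only then does the universal lower bound become $\dim\Bun_\cG$, which combined with the $\bC^*$-degeneration upper bound gives part (i). Your proof becomes correct if you replace your universal coisotropy claim by this elementary lower bound and reorder accordingly; as written, the lower bound for components of special fibres lying outside the Deligne--Mumford locus is a genuine gap.
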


\begin{proof}
Our proof is largely modelled on \cite[Proposition 1]{Ginzburg}. First we show that $\cA_{\cG}$ is irreducible. Indeed, since we know that $Bun_{\cG}$ is very good and connected we deduce that $\cM_{\cG} = T^*Bun_{\cG}$ is irreducible \cite[\S 1]{BD}. It follows that the image of $h_\cG$ is an irreducible topological space and hence its closure, which is $\cA_{\cG}$, is also irreducible.

Since $Bun_{\cG}$ is good, we have $\dim(\cM_{\cG}) = \dim( T^*Bun_{\cG} ) = 2 \dim( Bun_{\cG})$. It follows that each irreducible component of any fibre of $h_\cG$ has dimension greater than or equal to $2 \dim( Bun_{\cG} ) - \dim( \cA_{\cG} )$. In particular, if $\cN$ is an irreducible component of $\cN ilp_{\cG}$, then:
\begin{equation*}
\dim( \cN ) \ge 2 \dim( Bun_{\cG} ) - \dim( \cA_{\cG} ).
\end{equation*}
On the other hand, since $\cN ilp_{\cG}$ is isotropic, we have
\begin{equation*}
\dim( \cN ) \le \dim( Bun_{\cG} ).
\end{equation*}
Note that $\cN ilp_{\cG}$ is Lagrangian if and only if the above is an equality for each irreducible component $\cN$ of $\cN ilp_{\cG}$. 

Next, let $F$ be a generic non-singular fibre of $h_\cG$. Because the Hitchin map is a Poisson map, we have that $F$ is a coisotropic substack of $T^*Bun_{\cG}$ and it follows that every irreducible component of $F$ has dimension at least $\dim( Bun_{\cG})$. Using the natural $\bC^*$-action on $T^*Bun_{\cG}$, we can put the fibre $F$ into a family parametrised by $\mathbb{A}^1$, so that the central fibre is $\cN ilp_{\cG}$ and all other fibres are isomorphic to $F$. One deduces from this that $\dim(F) \le \dim( \cN ilp_{\cG} )$. Combining these dimension estimates, we have
\begin{equation*}
\dim( Bun_{\cG} ) \le \dim(F) \le \dim( \cN ilp_{\cG} ) \le \dim( Bun_{\cG}).
\end{equation*}
We must have equality throughout and hence $\dim(F) = \dim( \cN ilp_{\cG}) = \dim( Bun_{\cG})$. Since the generic fibres of $h_\cG$ have dimension $\dim( Bun_{\cG})$ and $\cM_{\cG}$ is irreducible, we deduce that $\cA_{\cG}$ has dimension $\dim( T^*Bun_{\cG} ) - \dim( Bun_{\cG}) = \dim( Bun_{\cG})$, which proves (ii). Repeating the argument using the $\bC^*$-action shows that every fibre of $h_\cG$ has dimension at most $\dim( \cN ilp_{\cG} ) = \dim( Bun_{\cG})$. On the other hand, any irreducible component of any fibre of $h_\cG$ has dimension at least $\dim( T^* Bun_{\cG}) - \dim( \cA_{\cG}) = \dim( Bun_{\cG})$. Hence any irreducible component of any fibre has dimension equal to $\dim( Bun_{\cG} )$, proving (i).
\end{proof}

\begin{cor}\label{c:flat} 
\begin{enumerate} 
\item[(i)] If $\cA_{\cG}$ is smooth, then the Hitchin map is flat and surjective as a map to $\cA_{\cG}$.
\item[(ii)] More generally, let $\cA^\reg_\cG$ denote the smooth subscheme of $\cA_{\cG}$. Then $h_\cG: h^{-1}_\cG(\cA^{\reg}_\cG) \ra \cA^{\reg}_\cG$ is flat. 
\end{enumerate} 
\end{cor}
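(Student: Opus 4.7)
My plan is to deduce the corollary from the previous theorem by applying the miracle flatness criterion: a morphism $f : X \to Y$ from a Cohen--Macaulay source to a regular target all of whose fibres are equidimensional of the expected dimension $\dim X - \dim Y$ is automatically flat. All three inputs are either given or established by the previous theorem.

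First, surjectivity in part (i) is free: the previous theorem asserts that every fibre of $h_\cG$ has an irreducible component of dimension $\dim(\Bun_\cG)$, hence no fibre is empty, so $h_\cG$ is already surjective onto $\cA_\cG$. The dimensions also line up: since $\Bun_\cG$ is very good, $\cM_\cG = T^*\Bun_\cG$ is irreducible of dimension $2\dim(\Bun_\cG)$; the previous theorem gives $\dim(\cA_\cG) = \dim(\Bun_\cG)$; and every fibre of $h_\cG$ is equidimensional of dimension $\dim(\Bun_\cG) = 2\dim(\Bun_\cG) - \dim(\Bun_\cG)$, matching the expected relative dimension.

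The substantive input to verify is that $\cM_\cG$ is Cohen--Macaulay so that miracle flatness applies. This can be checked on a smooth atlas $U \to \Bun_\cG$: locally $T^*\Bun_\cG$ is cut out inside $T^*U$ by the infinitesimal action of the automorphism groupoid, i.e.\ by the vanishing of a moment map with target the dual of the Lie algebra of stabilisers. Very-goodness of $\Bun_\cG$ guarantees that this cut-out locus has the expected codimension, which exhibits $T^*\Bun_\cG$ as a local complete intersection and in particular Cohen--Macaulay. With the CM source, the regular target (by hypothesis in (i), or by passage to $\cA^\reg_\cG$ in (ii)), and equidimensional fibres of the correct dimension all in hand, miracle flatness yields flatness. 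For (ii), one simply restricts to the open substack $h_\cG^{-1}(\cA^\reg_\cG) \subseteq \cM_\cG$, which is again Cohen--Macaulay, and runs the same argument over $\cA^\reg_\cG$.

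The chief obstacle is formulating miracle flatness cleanly for a morphism of algebraic stacks and rigorously confirming the Cohen--Macaulay property of $\cM_\cG$ — one must either state a stack-theoretic version of the criterion or check flatness after base change to a smooth atlas of $\cA_\cG$ (resp.\ $\cA^\reg_\cG$) and descend. Beyond this point, every remaining ingredient — the dimension count, surjectivity, and the reduction of (ii) to the smooth locus — is routine given the preceding theorem.
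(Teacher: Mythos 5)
Your flatness argument is essentially the paper's: very-goodness makes $T^*\Bun_\cG$ a local complete intersection, hence Cohen--Macaulay, the fibres are equidimensional of dimension $\dim(\Bun_\cG)$ by the preceding theorem, and miracle flatness over the regular base (all of $\cA_\cG$ in (i), or $\cA^\reg_\cG$ in (ii)) does the rest. That part is fine.

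The gap is in your claim that surjectivity is ``free.'' The preceding theorem says that any irreducible component of any fibre has dimension $\dim(\Bun_\cG)$; this is a statement about the components of the \emph{non-empty} fibres and is vacuously true for an empty fibre, so it does not rule out empty fibres. This matters precisely because $\cA_\cG$ is defined as the \emph{closure} of the image of $h_\cG$: a priori the image could be a proper dense subset, with empty fibres over boundary points. The paper closes this gap with an extra argument you omit: once flatness is established, the image is open in $\cA_\cG$, hence of the form $U\cap\cA_\cG$ with $U$ an open neighbourhood of $0$ in the ambient vector space $\bigoplus_i\Gamma(X,\Omega^{d_i}(d_i.x))$; since $\cA_\cG$ is a closed $\bC^*$-invariant cone and the image is $\bC^*$-invariant, saturating $U\cap\cA_\cG$ under the (weighted) $\bC^*$-action gives all of $\cA_\cG$, so the image is everything. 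You need this (or some substitute) to get surjectivity; note also that the order of deduction is reversed relative to your write-up --- flatness comes first and surjectivity is a consequence, not an independent observation.
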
 

\begin{proof} The very good property implies that $T^*\Bun_{\cG}$ is a local complete intersection, therefore Cohen-Macauley. By the previous theorem, fibres of the the hitchin map $h_\cG$ have the same dimension. As the base is regular, we can use miracle flatness to conclude that $h_\cG$ is flat.

As $h_\cG$ is flat we deduce that the image of $h_\cG$ is an open subset of $\cA_{\cG}$. Let us denote the vector space $\bigoplus_i \Gamma(X, \Omega^{d_i}(d_i.x))$ by $\mathbb{W}$ and let $0 \in \mathbb{W}$ denote the origin. Note that $\cA_{\cG} \subset \mathbb{W}$ is a closed, $\bC^*$-invariant subvariety and $0 \in \cA_{\cG}$. The image of $h$ is open in $\cA_{\cG}$, hence has the form $U \cap \cA_{\cG}$, where $U$ is an open subset of $\mathbb{W}$ containing $0$. It follows that $\bC^* U = \mathbb{W}$. Clearly we also have $\bC^* \cA_{\cG} = \cA_{\cG}$ and hence it follows that $\bC^*( U \cap \cA_{\cG} ) = (\bC^* U) \cap \cA_{\cG} = \mathbb{W} \cap \cA_{\cG} = \cA_{\cG}$. However the image of $h$ is clearly $\bC^*$-invariant, so we get $U \cap \cA_{\cG} = \bC^*(U \cap \cA_{\cG}) = \cA_{\cG}$, proving surjectivity. Part (ii) similarly follows by miracle flatness.
\end{proof} 

\begin{rem} The image of the parahoric Hitchin map is smooth (in fact, an affine space) for many parahorics. However, for certain parahorics in type $D$, this image can be singular; see \cite{BK} for details. 
\end{rem}

\section{Properness and complete integrability} \label{s:integrability} 

Our goal in this section is to show that generic fibres of the parahoric Hitchin map are abelian varieties. For this, we need to show that the Hitchin map is proper over some open subset in the base $\cA_{\cG}$.  In fact, we will show that the Hitchin map on the moduli space of {\em polystable} parahoric Higgs bundles is proper.

\subsection{Properness of the Hitchin map} 

Let $\mathcal{M}_\cG$ denote the moduli stack of $\cG$-Higgs bundles on $X$. The notions of stability and polystability and the corresponding coarse moduli spaces were defined for parahoric bundles in \cite{BS} and extended to parahoric Higgs bundles in \cite{BGPM}. We denote by $\mathcal{M}_{\cG}^{s}$ (resp. $\mathcal{M}_{\cG}^{ps}$) the open substack of stable (resp. polystable) $\cG$-Higgs bundles. Denote by $M^s_{\cG}$ (resp. $M^{ps}_{\cG}$) the underlying coarse moduli spaces of stable (resp. polystable) $\cG$-Higgs bundles on $X$. The Hitchin map $h_\cG : \mathcal{M}_\cG \to \mathcal{A}_\cG$ restricted to $\mathcal{M}^{ps}_{\cG}$ factors through the map $\mathcal{M}^{ps}_\cG \to M^{ps}_{\cG}$ and thus defines a map $h_\cG : M^{ps}_\cG \to \mathcal{A}_{\cG}$.

In this subsection, we prove that: 

\begin{thm}\label{t:properss}
The parahoric Hitchin map $h_{\cG} : M^{ps}_{\cG} \to \mathcal{A}_{\cG}$ on the moduli space of polystable parahoric $\cG$-Higgs bundles is a proper map.
\end{thm}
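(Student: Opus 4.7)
The plan is to derive properness analytically, using the parahoric non-abelian Hodge correspondence of \cite{BGPM} to translate compactness in $M^{ps}_{\cG}$ into compactness of tame harmonic bundles. Because $M^{ps}_{\cG}$ is a separated variety of finite type over $\bC$ (by \cite{BS, BGPM}) and $\cA_{\cG}$ is affine, properness in the algebraic sense is equivalent to topological properness in the classical topology. It therefore suffices to check that $h_{\cG}^{-1}(K)$ is sequentially compact for every compact subset $K\subset \cA_{\cG}$. This is the parahoric analogue of Simpson's proof of properness of the Hitchin map via Uhlenbeck compactness.

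Concretely, I would take a sequence $(\cE_n,\phi_n)\in h_{\cG}^{-1}(K)$ and equip each $(\cE_n,\phi_n)$ with its canonical tame harmonic metric $h_n$, which exists by the parahoric non-abelian Hodge correspondence of \cite{BGPM} and whose behaviour at the ramification point $x$ is controlled by the weight $\theta$. Boundedness of the invariants $Q_i(\phi_n)$ provided by $K$, together with the standard Hitchin-Simpson pointwise inequalities relating $|\phi_n|_{h_n}$ to the characteristic coefficients, yields uniform $L^\infty$ and $L^2$ bounds on $\phi_n$ over compact subsets of $X-\{x\}$, along with uniform curvature bounds on the associated Chern connections. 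Applying Uhlenbeck compactness for tame harmonic bundles, after passing to a subsequence and applying gauge transformations, one obtains a smooth limit triple $(\cE_\infty,\phi_\infty,h_\infty)$ on compact subsets of $X-\{x\}$. Tameness controls the behaviour near $x$, and Mochizuki's extension theorem (in the parahoric form established in \cite{BGPM}) promotes this limit to a polystable parahoric $\cG$-Higgs bundle on all of $X$, producing the desired limit point in $M^{ps}_{\cG}$.

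The main obstacle is the analytic compactness step: one must verify that no concentration of energy or curvature occurs at the ramification point $x$, and that the weight $\theta$ is preserved in passing to the limit. Both of these delicate points are precisely the content of the tame parahoric non-abelian Hodge package of \cite{BGPM}, which generalises Simpson's and Mochizuki's theory to the parahoric setting. Once analytic sequential compactness is established, the identification of the Dolbeault moduli with the moduli of tame harmonic bundles as complex-analytic spaces immediately upgrades it to classical topological properness and hence, via the finite-type/separated hypotheses, to algebraic properness. An alternative route, flagged in the introduction of the paper, would be a parahoric Langton-type semistable reduction argument, but this would require developing a parahoric spectral/cameral cover theory and is strictly harder than invoking the existing non-abelian Hodge correspondence.
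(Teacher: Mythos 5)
Your strategy --- direct analytic compactness via harmonic metrics, Hitchin--Simpson estimates, and Uhlenbeck compactness --- is genuinely different from the paper's argument, and the gap in it is exactly the step you yourself flag as ``the main obstacle.'' You assert that the absence of energy or curvature concentration at the ramification point $x$, and the preservation of the weight $\theta$ in the limit, are ``precisely the content'' of \cite{BGPM}. They are not. What \cite{BGPM} provides is a non-abelian Hodge \emph{correspondence}: for each individual polystable parahoric Higgs bundle an adapted harmonic metric exists, and the resulting map of moduli spaces is a homeomorphism onto $Rep_{\mathcal{C}}(\pi,G)$. It does not provide uniform control over a \emph{sequence} of such metrics, nor a compactness theorem for families with bounded characteristic coefficients; ruling out bubbling, or jumping of the local type at $x$, for a degenerating sequence is an additional analytic theorem of Uhlenbeck/Mochizuki type in the tame parahoric setting, and proving it is essentially as hard as the properness statement itself. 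Likewise ``Mochizuki's extension theorem in the parahoric form established in \cite{BGPM}'' is not something that reference contains. As written, the hardest step of your proof is outsourced to a citation that does not support it.

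The paper avoids all of this analysis by using \cite{BGPM} only as a homeomorphism of topological spaces, twice: once to identify $M^{ps}_{\cG}$ with $Rep_{\mathcal{C}}(\pi,G)$, and once to identify $Rep_{\mathcal{C}'}(\pi,GL(n,\bC))$ with a moduli space of strongly parabolic $GL(n,\bC)$-Higgs bundles. The bridge between the two is Simpson's theorem \cite[Corollary 9.16]{SIM} that, for a closed embedding $G\hookrightarrow GL(n,\bC)$, the induced map $Rep(\pi,G)\to Rep(\pi,GL(n,\bC))$ is finite, hence proper; restricting to the conjugacy-class loci preserves properness. Properness of $h_{\cG}$ then follows by a diagram chase from the known properness of the parabolic $GL(n,\bC)$ Hitchin map \cite{YOK}, with no new estimates required. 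To salvage your approach you would need to actually prove the tame parahoric compactness theorem (no concentration at $x$, stability of the weight under limits) or locate it in the literature; the reduction to $GL(n,\bC)$ is the cheaper route.
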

\begin{proof}
Let $\mathcal{C}_\theta \subset G$ be the conjugacy class corresponding to $\theta$ and let $m$ be the order of $\mathcal{C}_\theta$. We regard $X$ as an orbifold, where the point $x \in X$ has order $m$ and there are no other orbifold points (of course, this can be extended to the case of multiple marked points).

Let $\pi = \pi_1^{orb}(X)$ be the orbifold fundamental group. A presentation of $\pi$ is given by generators 
\[
\alpha_1 , \dots , \alpha_g, \quad \beta_1 , \dots , \beta_g , \quad \gamma,
\]
and relations
\[
[\alpha_1 , \beta_1] \dots [\alpha_g , \beta_g] \gamma = 1, \quad \quad \gamma^m = 1.
\]
In this presentation, $\gamma$ corresponds to a loop around the orbifold point $x$. Let $Hom( \pi , G)$ denote the space of homomorphisms $\rho : \pi \to G$. This is an affine algebraic variety and the group $G$ acts by conjugation. Let $Rep( \pi , G) = Hom( \pi , G)//G$ be the affine GIT quotient. Let $Rep_{\mathcal{C}}( \pi , G) \subset Rep(\pi , G)$ be the closed subvariety of representations $\rho : \pi \to G$ such that $\rho(\gamma) \in \mathcal{C}$.

In \cite{BGPM}, the non-abelian Hodge correspondence is extended to the case of parahoric Higgs bundles. In particular, as a special case of \cite[Theorem 7.8]{BGPM}, we obtain a homeomorphism
\[
M^{ps}_{\cG} \cong Rep_{\mathcal{C}}( \pi , G ),
\]
where $M^{ps}_{\cG}$ is the moduli space of polystable parahoric $\cG$-Higgs bundles on $X$.

Choose an integer $n$ such that $G$ is a closed subgroup of $GL(n,\mathbb{C})$. The inclusion $j : G \to GL(n,\mathbb{C})$ induces a map $Rep( \pi , G) \to Rep(\pi , GL(n,\mathbb{C}))$. By \cite[Corollary 9.16]{SIM}, this map is finite, hence it is also a proper map. Let $\mathcal{C}' \subset GL(n,\mathbb{C})$ be the conjugacy class in $GL(n,\mathbb{C})$ containing $j(\mathcal{C})$. Then we obtain a commutative diagram
\begin{equation*}
\xymatrix{
Rep_{\mathcal{C}}(\pi , G) \ar[r] \ar[d]^-j & Rep(\pi , G) \ar[d]^-j \\
Rep_{\mathcal{C}'}(\pi , GL(n,\mathbb{C})) \ar[r] & Rep(\pi , GL(n,\mathbb{C})) \\
}
\end{equation*}
In this diagram the horizontal maps are closed immersions and the second vertical map is proper, hence the map $j : Rep_{\mathcal{C}}(\pi , G) \to Rep_{\mathcal{C}'}(\pi , GL(n,\mathbb{C}))$ is also proper. The conjugacy class $\mathcal{C}'$ has finite order and therefore is of the following form: there are rational numbers $0 \le \alpha_1 \le \alpha_2 \le \dots \le \alpha_n < 1$ such that $g \in \mathcal{C}'$ if and only if $g$ is conjugate to ${\rm diag}( {\rm exp}(2\pi i \alpha_1) , {\rm exp}(2\pi i \alpha_2) , \dots , {\rm exp}(2\pi i \alpha_n ) )$. We may view $\alpha = (\alpha_1 , \alpha_2 , \dots , \alpha_n)$ as a set of parabolic weights.

By a second application of the non-abelian Hodge correspondence, we have a homeomorphism
\[
Rep_{\mathcal{C}'}(\pi , GL(n , \mathbb{C})) \cong M^{ps}_0( X , x , \alpha , GL(n,\mathbb{C}))
\]
where $M^{ps}_0( X , x , \alpha , GL(n,\mathbb{C}))$ denotes the moduli space of strongly parabolic $GL(n,\mathbb{C})$-Higgs bundles of parabolic degree $0$ and one marked point $x$ with parabolic weights $\alpha$.

As the non-abelian Hodge correspondence commutes with the group homomorphism $j : G \to GL(n,\mathbb{C})$, it is easy to see that we have a commutative diagram of the form
\begin{equation*}
\xymatrix{
M^{ps}_{\cG} \ar[d]^-{h_{\cG}} \ar@{}[r]|*[@]{\cong} & Rep_{\mathcal{C}}(\pi , G) \ar[r]^-j & Rep_{\mathcal{C}'}(\pi , GL(n,\mathbb{C})) \ar@{}[r]|*[@]{\cong} & M^{ps}_0(X , x , \alpha , GL(n,\mathbb{C})) \ar[d] \\
\bigoplus_i \Gamma( X , \Omega^{d_i}(d_i . x) ) \ar[rrr]^-j & & & \bigoplus_{i=1}^n \Gamma( X , \Omega^{i}(i . x ) )
}
\end{equation*}
where the vertical maps are Hitchin maps. We have seen that the upper horizontal map is proper. The lower vertical map is clearly a closed immersion. Therefore to show that the Hitchin map $h_{\cG} : M^{ps}_{\cG} \to \bigoplus_i \Gamma( X , \Omega^{d_i}(d_i . x))$ is proper, it is enough to show that the Hitchin map $M^{ps}_0(X , x , \alpha , GL(n,\mathbb{C})) \to \bigoplus_{i=1}^n \Gamma(X , \Omega^{i}(i.x) )$ is proper. However it is well known that the Hitchin map for parabolic $GL(n,\mathbb{C})$-Higgs bundles is proper \cite{YOK}, and so the proof is complete.
\end{proof}

\begin{rem}
Alternatively, to see that the Hitchin map $M^{ps}_0(X , x , \alpha , GL(n,\mathbb{C})) \to \bigoplus_{i=1}^n \Gamma(X , \Omega^{i}(i.x) )$ is proper, we may identify $M^{ps}_0(X , x , \alpha , GL(n,\mathbb{C}))$ with a moduli space of semistable orbifold $GL(n,\mathbb{C})$-Higgs bundles on $X$. Then by \cite{VAR}, there exists an elliptic surface $S \to X$ over $X$ such that semistable orbifold Higgs bundles on $X$ can be pulled back to give semistable Higgs bundles on $S$ in the usual sense. One sees that this gives rise to a commutative diagram of Hitchin maps
\begin{equation*}
\xymatrix{
M^{ps}_0(X , x , \alpha , GL(n,\mathbb{C})) \ar[r] \ar[d] & M^{ps}( S , GL(n,\mathbb{C}) ) \ar[d] \\
\bigoplus_{i=1}^n \Gamma( X , \Omega^i(i.x)) \ar[r] & \bigoplus_{i=1}^n \Gamma( S , Sym^i( T^*S ) ) 
}
\end{equation*}
where the horizontal arrows are closed immersions. Therefore properness of $M^{ps}_0(X , x , \alpha , GL(n,\mathbb{C})) \to \bigoplus_{i=1}^n \Gamma(X , \Omega^{i}(i.x) )$ follows from properness of the ordinary Hitchin map $M^{ps}( S , GL(n,\mathbb{C}) ) \to \bigoplus_{i=1}^n \Gamma( S , Sym^i( T^*S ) )$ on the surface $S$, which is well known \cite[Theorem 6.11]{SIM}.
\end{rem}

\subsection{Complete integrability} 

\begin{lem}\label{l:aut}
Let $(E , \phi)$ be a polystable $\Gamma$-equivariant $G$-Higgs bundle on $Y$. Under the non-abelian Hodge correspondence, $(E,\phi)$ corresponds to a reductive representation $\rho : \pi_1^{orb}(X) \to G$ of the orbifold fundamental group. Let $Aut_\Gamma(E,\phi)$ be the group of automorphisms of $(E,\phi)$ commuting with the action of $\Gamma$ and let $Aut(\rho) = \{ g \in G \; | \; g \rho g^{-1} = \rho\}$. Then we have an inclusion $Aut_\Gamma(E,\phi) \subseteq Aut(\rho)$.
\end{lem}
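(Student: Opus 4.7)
The plan is to apply functoriality of the non-abelian Hodge correspondence of \cite{BGPM}: an element $\psi \in Aut_\Gamma(E,\phi)$ should transport through the correspondence to an automorphism of the monodromy data, i.e. an element of $Aut(\rho)$. This reduces the lemma to checking functoriality at the level of morphisms, which the parahoric NAH theorem provides.

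Concretely, I would first recall the construction of $\rho$. Polystability together with $\Gamma$-equivariance yields a $\Gamma$-invariant harmonic metric $h$ on $E$, with prescribed singular asymptotics at the orbifold point $x$, and $\rho$ is realised as the monodromy of the flat $G$-connection $D = \nabla_h + \phi + \phi^{*_h}$ on the orbifold $X$. Given $\psi \in Aut_\Gamma(E,\phi)$, the pulled-back metric $\psi^*h$ is another harmonic metric for the same polystable Higgs bundle with the same parabolic type; by the uniqueness clause in the parahoric NAH correspondence, $\psi^*h$ differs from $h$ by an automorphism of $(E,\phi)$, so after composing $\psi$ with a suitable automorphism (or, equivalently, choosing $h$ to be $\psi$-invariant from the outset) we may assume $\psi$ preserves $h$ and hence commutes with $D$.

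Once $\psi$ is flat for $D$, pick a basepoint $y_0$ on the orbifold and a trivialisation of the fibre $E_{y_0}\cong G$ compatible with parallel transport. The action of $\psi$ on $E_{y_0}$ is then left multiplication by a uniquely determined element $g_\psi\in G$, and the fact that $\psi$ intertwines parallel transport is precisely the identity $g_\psi\rho(\gamma)g_\psi^{-1}=\rho(\gamma)$ for every $\gamma\in\pi_1^{orb}(X)$; that is, $g_\psi\in Aut(\rho)$. Injectivity of the assignment $\psi\mapsto g_\psi$ is immediate: if $g_\psi = \mathrm{id}$, then $\psi$ acts trivially on $E_{y_0}$, and flatness of $\psi$ with respect to $D$ propagates this to $\psi=\mathrm{id}$ on all of $E$.

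The main obstacle is the first step, namely the harmonic-metric analysis at the orbifold point: one must verify that $\psi^*h$ preserves the prescribed parabolic asymptotics so it qualifies as a harmonic metric in the sense of \cite{BGPM}, and then apply the corresponding uniqueness statement there. These are not formal; they are precisely where the parahoric input of \cite{BGPM} is used. Once this is in place, everything else is a standard exercise in parallel transport, and the inclusion $Aut_\Gamma(E,\phi)\subseteq Aut(\rho)$ follows.
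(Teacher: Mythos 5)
Your overall strategy---show that $\psi$ is parallel for the flat connection $D=\nabla_h+\phi+\phi^{*_h}$ and then read off an element of $G$ commuting with the monodromy---is the right one, and your second and third paragraphs (parallel transport, identification of $g_\psi$) agree with what the paper does. The gap is in the first step, the reduction to the case where $\psi$ preserves the harmonic metric. An automorphism of a \emph{polystable} Higgs bundle need not preserve any harmonic metric: already for $(E,\phi)=(E_0,\phi_0)^{\oplus 2}$ with $(E_0,\phi_0)$ stable, the automorphism group is $GL(2,\bC)$ and a unipotent element such as $\bigl(\begin{smallmatrix}1&1\\0&1\end{smallmatrix}\bigr)$ preserves no Hermitian metric, although it \emph{is} $D$-parallel and does lie in $Aut(\rho)$. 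So the parenthetical ``choose $h$ to be $\psi$-invariant from the outset'' is unavailable in general. The alternative you offer---replace $\psi$ by $\psi a^{-1}$ where $a$ is the automorphism by which $\psi^*h$ differs from $h$---is circular: you would conclude $\psi a^{-1}\in Aut(\rho)$, and to deduce $\psi\in Aut(\rho)$ you need $a\in Aut(\rho)$, which is an instance of the very statement being proved. As written, your argument only covers the metric-preserving automorphisms.

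The paper avoids this by not using uniqueness of the harmonic metric at all. It takes the representation $V=\Hom(\fg,\fg)$, forms $V_E=E\times_G V$ with the induced Higgs field $\phi_E=[\phi,\,\cdot\,]$ and the induced solution of the Hitchin equation $F_A+[\phi_E,\phi_E^*]=0$, and views $\psi\in Aut_\Gamma(E,\phi)$ as a holomorphic section $s$ of $V_E$ with $\phi_E(s)=[\phi,s]=0$. Integrating $h(F_As+[\phi_E,\phi_E^*]s,s)$ over $Y$ and integrating by parts yields $\|\partial_A s\|^2+\|\phi_E^*s\|^2=0$, hence $\partial_A s=\phi_E^*s=0$ and therefore $\nabla s=(\nabla_A+\phi_E+\phi_E^*)s=0$; that is, $s$ is parallel for the flat connection regardless of whether it preserves $h$. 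This Weitzenb\"ock-type vanishing is the missing ingredient in your write-up; with it in place, your parallel-transport conclusion goes through verbatim, and $\Gamma$-equivariance of $s$ gives commutation with the full orbifold monodromy $\rho$.
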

\begin{proof}
Let $V$ be any representation of $G$ and set $V_E = E \times_G V$. Then $\phi$ induces a bundle endomorphism $\phi_E : V_E \to V_E \otimes \Omega^1_Y$. Polystability of $(E,\phi)$ implies polystability of $(V_E , \phi_E)$. Thus there exists a hermitian metric $h( \; , \; )$ on $V_E$ (taken to be conjugate linear in the second variable) such that $(V_E , \phi_E)$ satisfies the Hitchin equations
\[
F_A + [\phi_E , \phi^*_E] = 0,
\]
where $F_A$ is the curvature of the Chern connection $\nabla_A = \partial_A + \overline{\partial}$ on $V_E$ induced by $h$. Now suppose $s$ is a holomorphic section of $V_E$ such that $\phi_E(s) = 0$. We claim that $\partial_A s = \phi^*_Es = 0$. To see this consider the following:
\begin{equation*}
\begin{aligned}
0 & =i\int_Y h( F_A s + [\phi_E , \phi^*_E]s , s ) \\
& = i\int_Y h( (\overline{\partial}_A \partial_A s , s) + i\int_Y h( \phi_E \phi^*_E s , s) \\
&= i\int_Y h( \partial_A s , \partial_A s ) - i\int_Y h( \phi^*_E s , \phi^*_E s) \\
&= || \partial_A s ||^2 + || \phi^*_E s ||^2.
\end{aligned}
\end{equation*}
Thus $\partial_A s = \phi^*_E s = 0$, as claimed. Under the non-abelian Hodge correspondence, $(E,\phi)$ corresponds to a flat $G$-connection $\nabla$. The flat connection $\nabla$ acts as the flat connection on $V_E$ given by $\nabla = \nabla_A + \phi_E + \phi^*_E$. Therefore $\nabla s = 0$ and so $s$ is covariantly constant. To prove the lemma, consider the case where $V = Hom( \fg , \fg )$ and $s : ad(E) \to ad(E)$ is an automorphism $s \in Aut_\Gamma(E , \phi)$. The fact that $s$ is an automorphism of $(E,\phi)$ means that $\phi_E (s) = [\phi , s] = 0$. By the above argument, $s$ is an automorphism of $\nabla$ which moreover commutes with $\Gamma$, i.e. $s \in Aut( \rho)$. 
\end{proof}

\begin{lem}\label{l:openzg}
Let $Z(G) \subset G$ be the center of $G$. The open substack of $\cM_\cG$ consisting of pairs $(\cE , \phi)$ whose automorphism group is $Z(G)$ is non-empty.
\end{lem}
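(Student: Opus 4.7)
The plan has three ingredients. First I establish openness, then reduce the non-emptiness question to representation theory, and finally construct an explicit representation using $g \geq 2$.

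For openness, the center $Z(G)$ always acts on any parahoric Higgs bundle by automorphisms (it acts trivially on $\mathrm{ad}(\cE)$ and $\mathrm{ad}^*(\cE)$), so $Z(G) \subseteq Aut(\cE, \phi)$ throughout $\cM_\cG$. Since automorphism groups on algebraic stacks are upper semicontinuous, the locus where $Aut(\cE, \phi)$ attains its minimum value is open; once we exhibit a single point with $Aut(\cE, \phi) = Z(G)$, this minimum must equal $Z(G)$ and the lemma follows.

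For the reduction, I would invoke the homeomorphism $M^{ps}_\cG \cong Rep_{\mathcal{C}_\theta}(\pi, G)$ with $\pi = \pi_1^{orb}(X)$ established in the proof of Theorem~\ref{t:properss}. Combined with Lemma~\ref{l:aut}, a polystable $(\cE, \phi)$ corresponding to a reductive $\rho$ satisfies $Z(G) \subseteq Aut_\Gamma(\cE, \phi) \subseteq Aut(\rho) = Z_G(\rho(\pi))$. Thus it suffices to produce a reductive representation $\rho : \pi \to G$ with $\rho(\gamma) \in \mathcal{C}_\theta$ whose image has centralizer exactly $Z(G)$ in $G$.

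To build such a $\rho$, fix $c \in \mathcal{C}_\theta$; then specifying $\rho$ with $\rho(\gamma) = c$ is equivalent to giving a tuple $(A_1, B_1, \dots, A_g, B_g) \in G^{2g}$ with $[A_1, B_1] \cdots [A_g, B_g] = c^{-1}$. Since every element of a semisimple complex Lie group is a commutator, this variety of tuples is non-empty. Exploiting $g \geq 2$, I would first pick $(A_2, B_2)$ from the dense open subset of $G \times G$ consisting of pairs generating a Zariski dense subgroup of $G$, then choose $(A_i, B_i)$ for $i \geq 3$ arbitrarily, and finally solve for $(A_1, B_1)$ using surjectivity of the commutator map $G \times G \to G$. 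The resulting $\rho$ has Zariski dense image in $G$, hence is reductive with centralizer $Z(G)$, as required.

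The main obstacle lies in this last step: reconciling the boundary constraint $\rho(\gamma) \in \mathcal{C}_\theta$ with the genericity required to force the image to be Zariski dense. The hypothesis $g \geq 2$ is precisely what makes the decoupling above possible --- it allows $(A_2, B_2)$ to guarantee Zariski density independently of $(A_1, B_1)$, which remains free to satisfy the commutator equation.
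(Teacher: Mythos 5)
Your proposal is correct and follows essentially the same route as the paper: identify $\cG$-Higgs bundles with $\Gamma$-equivariant Higgs bundles on a cover, use the non-abelian Hodge correspondence together with Lemma~\ref{l:aut} to bound $Aut_\Gamma(E,\phi)$ by the centralizer of the corresponding representation, and produce a representation with Zariski dense image and prescribed holonomy in $\mathcal{C}_\theta$ around $x$. The only difference is that you spell out the existence of such a dense representation (via surjectivity of the commutator map and genericity of dense pairs, using $g \ge 2$), a step the paper leaves as ``easy to see.''
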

\begin{proof}
Let $p : Y \to X$ be a Galois covering of $X$ with Galois group $\Gamma$. Recall that we can identify $\cG$-Higgs bundles on $X$ with $\Gamma$-equivariant $G$-Higgs bundles on $Y$ of type $\tau$ and this is an equivalence of stacks. Therefore it is enough to show that there exists a $\Gamma$-equivariant $G$-Higgs bundle on $Y$ of type $\tau$ whose automorphism group (as a $\Gamma$-equivariant Higgs bundle) is $Z(G)$. By the non-abelian Hodge correspondence, polystable $\Gamma$-equivariant $G$-Higgs bundles of type $\tau$ correspond to reductive representations of the orbifold fundamental group of $X$ for which the holonomy of a loop around $x$ takes values in the conjugacy class containing $\tau$. It is easy to see there exists such representations $\rho$ whose image is Zariski dense in $G$. Such a representation has automorphism group $Z(G)$. By Lemma \ref{l:aut}, the $\Gamma$-equivariant Higgs bundle $(E,\phi)$ on $Y$ corresponding to $\rho$ will have automorphism group contained in $Z(G)$. However, every element of $Z(G)$ clearly acts as a $\Gamma$-equivariant Higgs bundle automorphism of $(E,\phi)$, hence $Aut_\Gamma(E,\phi) = Z(G)$.
\end{proof}

\begin{thm}\label{t:fibres}
Let $h_\cG :T^*\Bun_{\cG} \ra \cA_{\cG}$ denote the parahoric Hitchin map. Then the generic fibres of $h_\cG$ are $Z(G)$-gerbes over a disjoint union of abelian varieties. More precisely, there exists a non-empty Zariski open set $U\subset \cA_{\cG}$ such that the morphism $h^{-1}_\cG(U)\ra U$ is proper and smooth and each fibre is a $Z(G)$-gerbe over a disjoint union of abelian varieties.
\end{thm}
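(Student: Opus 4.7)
The plan is to isolate a non-empty Zariski open $U\subset \cA_\cG^{\reg}$ on which the Hitchin map is flat and surjective, every point of $h_\cG^{-1}(U)$ is stable with automorphism group exactly $Z(G)$, and the cameral-cover description of classical Hitchin fibres applies on a suitable Galois cover of $X$. Once such a $U$ is in hand, properness and smoothness will follow readily from the ingredients already assembled, and the identification of the fibres as disjoint unions of abelian varieties will be obtained by descent from the classical Hitchin system on the cover.

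To construct $U$, I would start from the open substack $\cM_\cG^\circ\subset\cM_\cG$ of pairs with automorphism group $Z(G)$, non-empty by Lemma \ref{l:openzg}; every such point has minimal stabiliser, hence is stable. Since $\cA_\cG$ is irreducible and $h_\cG$ is dominant, $h_\cG(\cM_\cG^\circ)$ meets the smooth locus $\cA_\cG^{\reg}$, and by flatness of $h_\cG$ over $\cA_\cG^{\reg}$ (Corollary \ref{c:flat}) the image $U_1$ of $\cM_\cG^\circ\cap h_\cG^{-1}(\cA_\cG^{\reg})$ is a non-empty open subset of $\cA_\cG^{\reg}$. Because $M^{ps}_\cG \to \cA_\cG$ is proper (Theorem \ref{t:properss}), the image in $\cA_\cG$ of the closed substack of strictly polystable bundles is closed; intersecting its complement with $U_1$ yields the desired $U$, over which $h_\cG^{-1}(U)\subset M^s_\cG$ is a $Z(G)$-gerbe over its coarse moduli, and $h_\cG^{-1}(U)\to U$ is proper as the pullback of the proper map on $M^{ps}_\cG$.

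To identify the fibres, I would pass to the equivariant picture of \S\ref{ss:equivariant}: choose a Galois cover $p:Y\to X$ with group $\Gamma$ realising $\Bun_\cG\simeq\Bun_Y^\tau(\Gamma,G)$, so that $h_\cG$ is identified with the $\Gamma$-invariant part of the classical Hitchin map $T^*\Bun_Y(G)\to \bigoplus_i H^0(Y,\Omega_Y^{d_i})$. A generic fibre of the latter is a torsor under (the connected component of) the generalised Prym variety of the associated cameral cover $\widetilde Y\to Y$ (Hitchin, Faltings, Donagi--Gaitsgory), hence an abelian variety. After further shrinking $U$ so that its image in the $G$-Hitchin base of $Y$ lies in this regular locus, the parahoric fibre is cut out inside the classical one as the $\Gamma$-fixed locus, which is a closed subgroup scheme whose connected components are abelian varieties. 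Smoothness of $h_\cG^{-1}(U)\to U$ then follows from flatness, regularity of $U$ and smoothness of the abelian fibres.

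The main obstacle is the final identification. Pinpointing the $\Gamma$-fixed locus inside the generalised Prym as a disjoint union of abelian varieties of dimension exactly $\dim\Bun_\cG$ requires care: for $\mathrm{GL}_n$ it reduces to a direct computation with $\Gamma$-equivariant line bundles on a $\Gamma$-equivariant spectral cover, but for general simply-connected $G$ one must work with the cameral cover framework and verify that the dimension of the $\Gamma$-invariants of the cocharacter Prym matches the formula $(g-1)\dim G + \#\{r\in R^+ \mid \langle \theta,r\rangle\neq 0,1\}$ recalled in \S\ref{ss:parabolicBundles}; tracking the orbifold and ramification contributions from the cover $p$ is the key bookkeeping task.
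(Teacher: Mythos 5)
There is a genuine gap, and it sits at the heart of your construction of $U$: you need every point of the \emph{stack} fibre $h_\cG^{-1}(u)\subset T^*\Bun_\cG$ to be stable with automorphism group $Z(G)$, and neither of your devices delivers this. First, the inference ``automorphism group $Z(G)$, hence minimal stabiliser, hence stable'' is unjustified: simplicity does not imply (semi)stability for Higgs bundles, and the paper never uses such an implication. Second, deleting from $U_1$ the image of the strictly polystable locus of $M^{ps}_\cG$ only controls the \emph{polystable} points of a fibre; it says nothing about unstable Higgs bundles, which can perfectly well lie in the stack fibre over a point of your $U$. For the same reason your properness claim is circular: $h_\cG^{-1}(U)\to U$ is the pullback of the proper map on $M^{ps}_\cG$ only once you already know that every point of $h_\cG^{-1}(U)$ is stable, i.e.\ that the stack fibre is a $Z(G)$-gerbe over the coarse fibre. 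The missing idea is the paper's Hamiltonian-flow argument: the Hamiltonian vector field of each Hitchin function is represented by a Dolbeault cocycle of the form $(\alpha_{\rho,\mu}(\phi),0)$, so the flow fixes $\phi$ and translates $\overline{\partial}_E$ by $t\,\alpha_{\rho,\mu}(\phi)$; any $\mathcal{C}^\infty$ automorphism preserving $\phi$ preserves $\alpha_{\rho,\mu}(\phi)$, so the automorphism group and stability are constant along flows; completeness of the flows and connectedness of $h_\cG^{-1}(U)$ (a Zariski open in the irreducible $T^*\Bun_\cG$) then force $\mathcal{W}\cap h_\cG^{-1}(U)=h_\cG^{-1}(U)$ and $\cM^{s}_\cG\cap h_\cG^{-1}(U)=h_\cG^{-1}(U)$.

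The second gap is the identification of the fibres, which you defer to cameral/spectral data on the cover $Y$ and yourself flag as ``the main obstacle.'' This is not routine bookkeeping: a $\Gamma$-invariant point of the Hitchin base of $Y$ need not be generic in the full base (the cameral cover can degenerate over the ramification points), so the standard generalised-Prym description of the fibre is not automatically available; and even granting it, the $\Gamma$-fixed locus of a torsor under an abelian variety requires a separate argument to be non-empty and a disjoint union of abelian varieties. The paper bypasses all of this: once properness and the gerbe structure are established, each connected component $F$ of a fibre over $U$ is a compact complex manifold carrying a global frame of commuting holomorphic vector fields (the Hamiltonian fields of the Hitchin coordinates, whose flows are complete), hence is a complex torus, hence --- being algebraic --- an abelian variety. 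If you want to salvage your route, you must either prove the genericity and fixed-locus statements for the cameral picture on $Y$, or replace that entire step by the soft compactness-plus-commuting-flows argument.
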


\begin{proof}
Clearly there exists a non-empty smooth Zariski open set $U \subset \cA_{\cG}$ such that the morphism $h^{-1}_\cG(U) \ra U$ is smooth. By Corollary \ref{c:flat} (ii), the restriction of $h_\cG$ to $h^{-1}_\cG(U)$ is flat and in particular an open map. Restricting $U$ if necessary, we can assume that $h_\cG : h^{-1}_\cG(U) \to U$ is surjective. Since $h_\cG |_{h^{-1}_\cG(U)}$ is open, we can further restrict $U$ so that $U \subseteq h_\cG( \cM^{s}_\cG)$. Note that $\cM^s_{\cG}$ is a non-empty open subset. To see this, note that it is known that there exists stable $\cG$-bundles \cite{BS}. If $\cE$ is a stable $\cG$-bundle, then $(\cE , 0 )$ is a stable $\cG$-Higgs bundle, so $\cM^s_\cG$ is non-empty.

Next, let $\mathcal{W} \subseteq \cM_\cG$ be the open substack of pairs $(\cE , \phi)$ whose automorphism group is $Z(G)$, which is non-empty by Lemma \ref{l:openzg}. Therefore, we may assume that $U$ is chosen with $U \subseteq h_\cG(\mathcal{W})$. 

With our assumptions on $U$ we have that every fibre of $h^{-1}_\cG(U)$ contains a stable Higgs bundle and also contains a Higgs bundle with automorphism group $Z(G)$. We will give an argument using Hamiltonian flows to show that in fact every Higgs bundle in $h^{-1}_\cG(U)$ is stable and has automorphism group $Z(G)$. To carry out this argument we identify parahoric $\cG$-Higgs bundles with $\Gamma$-equivariant $G$-Higgs bundles on $Y$ of type $\tau$.

Let $(E,\phi)$ be a $\Gamma$-equivariant $G$-Higgs bundle. Thus $\phi \in H^0( Y , ad(E) \otimes \Omega_Y )^\Gamma$. Since we are working with $\Gamma$-equivariant Higgs bundles, one finds that the tangent space at smooth points is given as follows. Consider the $2$-term complex
\begin{equation*}\xymatrix{
ad(E) \ar[r]^-{[\phi , \, . \, ]} & ad(E) \otimes \Omega_Y.
}
\end{equation*}
Let us denote this complex by $[ \phi , \, . \, ]$. We have that $\Gamma$ acts on this complex and the tangent space to $(E,\phi)$ may be identified with $\mathbb{H}^1( Y , [ \phi , \, . \, ] )^\Gamma$. Let $\rho$ be an invariant polynomial on $\fg$ of degree $d_\rho$ and let $\mu \in H^1( Y , \Omega^{1-d_\rho}(-d_\rho x))^\Gamma$. Since $\Gamma$ is a finite group, by averaging over $\Gamma$, we may represent $\mu$ by a $\Gamma$-invariant section of $\Omega^{0,1}( Y , \Omega^{1-d_\rho}(-d_\rho x) )$. Define a function $f_{\rho , \mu}$ as in the proof of Theorem \ref{t:poissoncommute}. In other words, we use Serre duality to pair $\rho(\phi)$ with $\mu$ to get a complex number 
\begin{equation*}
f_{\rho , \mu}(\cE,\phi) = \langle \rho(\phi) , \mu \rangle = \int_Y \rho(\phi) \mu \in \mathbb{C}.
\end{equation*}

 Let $X_{\rho , \mu}$ be the Hamiltonian vector field associated to $f_{\rho , \mu}$. By essentially the same calculation as given in the proof of Theorem 
 \ref{t:poissoncommute}, we find that $X_{\rho ,\mu}$ evaluated at the point $(E , \phi)$ may be represented in Dolbeault cohomology by a cocycle of the form $(\alpha_{\rho , \mu}(\phi) , 0 )$, where $\alpha_{\rho , \mu}(\phi)$ is defined by:
\begin{equation*}
\kappa( \alpha_{\rho,\mu}(\phi) , \, \_ \, ) = d_\rho \rho( \phi , \phi , \dots , \phi , \, \_ \, ) \mu.
\end{equation*}
Moreover, since $\mu$ was chosen to be $\Gamma$-invariant, $\alpha_{\rho,\mu}$ is also $\Gamma$-invariant. For $t \in \mathbb{R}$, let $(E_t , \phi_t)$ be the $\Gamma$-equivariant Higgs bundle obtained by flowing $(E , \phi)$ along $X_{\rho , \mu}$ for time $t$ and let $\overline{\partial}_{E_t}$ be the $\overline{\partial}$-operator on $ad(E)$ defining the holomorphic bundle $E_t$. Since $X_{\rho , \mu}(E_t , \phi_t) = (\alpha_{\rho , \mu}(\phi_t) , 0)$, we see that $\partial_t \phi_t = 0$, or $\phi_t = \phi$ is constant. Thus $\alpha_{\rho , \mu}(\phi_t) = \alpha_{\rho , \mu}(\phi)$ is $t$-independent and it follows that
\begin{equation*}
(\overline{\partial}_{E_t} , \phi_t ) = (\overline{\partial}_{E} + t \alpha_{\rho , \mu}(\phi) , \phi).
\end{equation*}
Since $\alpha_{\rho , \mu}(\phi)$ is $\Gamma$-invariant, we see that the $\Gamma$-action on $E$ acts by automorphisms of $(\overline{\partial}_{E_t} , \phi_t)$ for any $t$. In this way, the $(\overline{\partial}_{E_t} , \phi_t)$ become $\Gamma$-equivariant Higgs bundles on $Y$. Now suppose that $\psi : E \to E$ is a $\mathcal{C}^\infty$ principal bundle isomorphism that preserves $\phi$. Then as $\psi$ preserves the Killing form $\kappa$ and the invariant polynomial $\rho$, we find that $\psi$ preserves $\alpha_{\rho , \mu}(\phi)$. Therefore $\psi$ preserves $\overline{\partial}_{E}$ if and only if it preserves $\overline{\partial}_{E_t}$ for any given $t \in \mathbb{R}$. In particular this means that the automorphism group of $(E_t , \phi_t)$ is independent of $t$. Suppose we had chosen $(E , \phi)$ with automorphism group $Z(G)$. Then for each $t$, the automorphism group of $(E_t , \phi_t)$ is also $Z(G)$. In a similar manner we see that for each $t$, $(E_t , \phi_t)$ is stable if and only if $(E , \phi )$ is stable.

Observe that the Hamiltonian flows above are complete, i.e., the flows exist for all time $t$. This implies that each connected component of each fibre of $h^{-1}_\cG(U) \to U$ has universal cover given by $\mathbb{C}^n$, such that the Hamiltonian flows are given by translations. In particular, it follows that any two points in the same connected component of a fibre of $h^{-1}_\cG(U) \to U$ are connected by a series of Hamiltonian flows along vector fields of the form $X_{\rho_j , \mu_j}$. Thus any two Higgs bundles in the same connected component of a fibre have the same automorphism group. This shows that $\mathcal{W} \cap h^{-1}_\cG(U)$ is non-empty and is open and closed in the analytic topology. But $h^{-1}_\cG(U)$ is connected since it is a Zariski open subset of $T^*Bun_{\cG}$, which is irreducible as $Bun_{\cG}$ is very good. Therefore $\mathcal{W} \cap h^{-1}_\cG(U) = h^{-1}_\cG(U)$ and so every Higgs bundle in $h^{-1}_\cG(U)$ has automorphism group $Z(G)$, as claimed. A similar argument shows that $\cM^{s}_\cG \cap h^{-1}_\cG(U) = h^{-1}_\cG(U)$, so every Higgs bundle in $h^{-1}_\cG(U)$ is stable.

The underlying space of $\cM^s_\cG$ can be identified with the coarse moduli space $M^s_\cG$ of stable parahoric Higgs bundles. The above results show that $h^{-1}_\cG(U)$ is a $Z(G)$-gerbe over its underlying coarse moduli space, which is an open subset in $M^s_\cG$. Next we use that the Hitchin map on the coarse moduli space of semistable parahoric Higgs bundles is proper to see that the restriction $h_\cG : h^{-1}_\cG(U) \to U$ is proper. 

Let $F$ be the underlying space of a connected component of a fibre of $h^{-1}_\cG(U) \to U$. Then $F$ is compact, since $h$ is proper. The Hamiltonian vector fields $X_1, \dots , X_m$ associated to the coordinates $h_1 , \dots , h_m$ of the Hitchin map give a global frame of commuting holomorphic vector fields on $F$. Since $F$ is compact in the analytic topology, then the existence of such vector fields implies that $F$ is biholomorphic to a complex torus. Moreover we have that $T^*Bun_{\cG}$, $\cA_{\cG}$ and $h_\cG$ are all algebraic, hence $F$ is itself algebraic. It is well known that a complex torus which is algebraic is an abelian variety, so we conclude that $F$ is an abelian variety.
\end{proof}

\begin{rem}
We have shown that each connected component of a generic fibre of the Hitchin map is a $Z(G)$-gerbe over an abelian variety. It is natural to conjecture that this is a trivial gerbe, so that each connected component is the product of an abelian variety with the classifying stack of $Z(G)$. Indeed, this is the case in the usual setting \cite{DP} and is a requirement for Langlands duality of Hitchin systems for Langlands dual groups. Triviality in type $A$ can be seen using the spectral data description of fibres of the Hitchin system \cite{SS}. Showing triviality of the gerbe along the fibres for more general parahorics presumably requires the development of a theory of ``parahoric cameral data". We leave this as an interesting topic for future work.
\end{rem}


\begin{bibdiv}
\begin{biblist}

\bib{BS}{article}
   {
     Author = {Balaji, V.},
     Author = {Seshadri, C.S.},
     Title = {Moduli of parahoric $\mathcal{G}$-torsors on a compact Riemann surface}
     Year = {2015}
     Journal = {J. Algebraic Geom.}
     Volume = {24}
     Number= {1}
     Pages = {1--49} 
   }

\bib{BK}{article}
   {
     Author = {Baraglia, D.},
     Author = {Kamgarpour, M.},
     Title = {On the image of the parabolic Hitchin map},
     Journal = {in preparation},
     Year={2017},
   }

\bib{BD}{article}
  {
    AUTHOR = {Beilinson, A.},
    Author={Drinfeld, V.}, 
    TITLE = {Quantization of Hitchin's integrable system and Hecke eigensheaves},
      YEAR = {1997},
  }

\bib{BGPM}{article}
{
Author = {Biquard, O.}
Author= {Garc\'ia-Prada, O.}
Author = {Mundet i Riera, I.}
Title = {Parabolic Higgs bundles and representations of the fundamental group of a punctured surface into a real group},
Year = {2015}
Journal={arXiv preprint arXiv:1510.04207v3},
}

\bib{BoalchParahoric}{article}
  {
   AUTHOR = {Boalch, P. P.},
     TITLE = {Riemann-{H}ilbert for tame complex parahoric connections},
   JOURNAL = {Transform. Groups},
  FJOURNAL = {Transformation Groups},
    VOLUME = {16},
      YEAR = {2011},
    NUMBER = {1},
     PAGES = {27--50},
  }

\bib{Bot}{article}
  {
    AUTHOR = {Bottacin, F.},
    TITLE = {Symplectic geometry on moduli spaces of stable pairs},
    YEAR = {1995},
Journal={Ann. Sci. \'{E}cole Norm. Sup.}
Volume={(4) 28}
Number={4}
Pages={391--433} 
  }

\bib{BT1}{article}
{
Title={Groupes r{\'e}ductifs sur un corps local},
Author={Bruhat, F.},
Author={Tits, J.},
Journal={Publ. Math. Inst. Hautes Etudes Sci.},
Volume={60},
Number={1},
Pages={5--184},
Year={1984},
Publisher={Springer}
}

\bib{BT2}{article}
{
title={Groupes r{\'e}ductifs sur un corps local},
Author={Bruhat, F.},
Author={Tits, J.},
Journal={Publ. Math. Inst. Hautes Etudes Sci.},
Volume={41},
Number={1},
Pages={5--251},
Year={1972},
Publisher={Springer}
}

\bib{DP}{article}
{
Author = {Donagi, R.}
Author = {Pantev, T.}
Title = {Langlands duality for Hitchin systems}
Year = {2012}
Journal = {Invent. Math.}
Volume = {189}
Number = {3}
Pages = {653--735}
}

  \bib{Faltings}{article}
  {
    AUTHOR = {Faltings, G.},
      TITLE = {Stable $G$-bundle and projective connections},
      YEAR = {1993},
      Journal = {J. Algebraic Geom.}
      Volume = {2}
      Number = {3}
      Pages = {507--568}

  }

  \bib{Ginzburg}{article} 
  { 
  Author={Ginzburg, V.}, 
  Title = {The global nilpotent variety is Lagrangian}
  Year={2001}
  Journal = {Duke Math. J.}
  Volume = {109} 
  Number = {3}
  pages = {511--519} 
  }

\bib{Heinloth}{article} 
{
Author={Heinloth, J.},
Title = {Uniformization of $\mathcal{G}$-bundles}
Year={2010}
Journal = {Math. Ann.}
Volume = {347}
Number = {3}
Pages = {499--528}
}

\bib{Hitchin1}{article}
{
AUTHOR = {Hitchin, N. J.},
     TITLE = {The self-duality equations on a {R}iemann surface},
   JOURNAL = {Proc. London Math. Soc. (3)},
  FJOURNAL = {Proceedings of the London Mathematical Society. Third Series},
    VOLUME = {55},
      YEAR = {1987},
    NUMBER = {1},
     PAGES = {59--126},}

\bib{Hitchin}{article} 
{
Author={Hitchin, N.},
Title={Stable bundles and integrable systems},
Year={1987}
Journal = {Duke Math. J.}
Volume = {54}
Number = {1}
Pages = {91--114}
}

\bib{KumarBook} {book}
{
 Title={Kac-Moody Groups, their Flag Varieties and Representation Theory},
 Author={Kumar, S.},
 Volume={204},
 Year={2002},
 Publisher={Springer Science \& Business Media}	
}

 \bib{LS}{article}{
AUTHOR = {Laszlo, Y.}, 
Author={Sorger, C.},
     TITLE = {The line bundles on the moduli of parabolic {$G$}-bundles over
              curves and their sections},
   JOURNAL = {Ann. Sci. \'Ecole Norm. Sup. (4)},
  FJOURNAL = {Annales Scientifiques de l'\'Ecole Normale Sup\'erieure.
              Quatri\`eme S\'erie},
    VOLUME = {30},
      YEAR = {1997},
    NUMBER = {4},
     PAGES = {499--525}
 }

\bib{LogaresMarten}{article}
{
	Title={Moduli of parabolic Higgs bundles and Atiyah algebroids},
	Author={Logares, M.},
	Author={Martens, J.},
	Journal={J. Reine Angew. Math.},
	Volume={2010},
	Number={649},
	Pages={89--116},
	Year={2010}
}

\bib{Markman}{article}
{
Author ={Markman, E.},
Title={Spectral curves and integrable systems},
Year={1994}
Journal={Compositio Math.}
Volume={93}
Number={3}
Pages={255--290} 
}

 \bib{Ngo}{article}
  {
  author = {Ng\^o, B. C.}
  title = {Le lemme fondamental pour les alg\`ebres de Lie.}
  journal = {Publ. Math. Inst. Hautes \'Etudes Sci.}
  volume = {111}
  year = {2010}
  pages = {1--169}
  }
  
    \bib{SS}{article}
  {
  Author={Scheinost, P.},
  Author={Schottenloher, M.},
  Title={Metaplectic quantization of the moduli spaces of flat and parabolic bundles},
  Year={1995},
  Journal={J. Reine Angew. Math.}
  Volume={466}, 
  Pages={145--219} 
  }

    \bib{Simpson}{article}
  {
  AUTHOR = {Simpson, C. T.},
     TITLE = {Higgs bundles and local systems},
   JOURNAL = {Inst. Hautes \'Etudes Sci. Publ. Math.},
  FJOURNAL = {Institut des Hautes \'Etudes Scientifiques. Publications
              Math\'ematiques},
    NUMBER = {75},
      YEAR = {1992},
     PAGES = {5--95},
   }

\bib{SIM}{article}
{
Author = {Simpson, C. T.}
Title = {Moduli of representations of the fundamental group of a smooth projective variety. II.}
Journal ={Inst. Hautes \'Etudes Sci. Publ. Math.},
Volume ={80}, 
Year = {1994}, 
Pages = {5-79},
}

\bib{VAR}{article}
{
Author = {Varma, R.},
Title = {On Higgs bundles on elliptic surfaces},
Journal = {Q. J. Math.},
Volume = {66},
Year = {2015}
Number = {3},
Pages = {991-1008}
}

\bib{YOK}{article}
{
Author = {Yokogawa, K.},
Title = {Compactification of moduli of parabolic sheaves and moduli of parabolic Higgs sheaves},
Journal = {J. Math. Kyoto Univ.},
Volume = {33},
Year = {1993},
Number = {2},
Pages = {451-504}
}
  
  \bib{Yu}{article}
  {
  title = {Smooth models associated to concave functions in Bruhat-Tits theory}
  author = {Yu, J.-K.}
  journal = {preprint}
  year = {2002}
  }

\end{biblist} 
\end{bibdiv} 
  \end{document}